\documentclass[12pt]{article}
\usepackage{amsmath}

\usepackage[utf8]{inputenc}
\usepackage[english]{babel}
\usepackage{amsmath, amsfonts, amsthm, amssymb, setspace, enumitem, tikz-cd, bbm, mathtools, graphicx,scalerel, mathrsfs, titling, hyperref}
\usepackage{graphicx}
\usepackage[toc,page]{appendix}
\usepackage{mathrsfs}
\usepackage{comment}
\usepackage{setspace}
\usepackage{arydshln}

\theoremstyle{break}
\newtheorem{theorem}{Theorem}[section]
\newtheorem{theorem-break}[theorem]{Theorem}
\newtheorem{lemma}[theorem]{Lemma}
\newtheorem{corollary}[theorem]{Corollary}
\newtheorem{proposition}[theorem]{Proposition}
\newtheorem{remark}[theorem]{Remark}

\newtheorem{definition}[theorem]{Definition}

\numberwithin{equation}{section}
\numberwithin{figure}{section}

\newcommand{\tr}{\mathrm{tr}}
\newcommand{\ind}{\mathrm{ind}}
\title{The Helton--Howe measure of almost normal Toeplitz operators}
\author{Yuto Sugahara \\
Graduate School of Science \\
Kyoto University \\
Sakyo-ku, Kyoto 606-8502, Japan \\
\href{mailto:kyodaisugaharay@gmail.com}{kyodaisugaharay@gmail.com}
}
\begin{document}

\setlength{\abovedisplayskip}{5pt}   
\setlength{\belowdisplayskip}{5pt}  

\begin{titlepage}
    \maketitle
    \begin{abstract}
The Helton--Howe measure associated with an almost normal operator was constructed by Helton and Howe. It provides a trace formula that allows us to calculate the trace of commutators that would otherwise be incalculable. We will investigate almost normal Toeplitz operators and determine their Helton--Howe measures in terms of the harmonic extensions of the symbols. Our result may be thought of as a generalization of the winding number formula of the Fredholm index of a Toeplitz operator.
    \end{abstract}

\end{titlepage}

\clearpage

\section{Introduction}
Let $T \in \mathcal{B}(\mathcal{H})$ be a bounded operator with the trace-class self-commutator $[T^*,T]$. We say that such an operator is almost normal. In \cite{heltonhowe1973}, Helton and Howe constructed the Helton--Howe measure $P_T$ for an almost normal operator $T$ and obtained the following significant trace formula;
$$\tr([p(X,Y), q(X,Y)])=\int_{\sigma(T)} J(p,q)dP_T,$$
where $p,q$ are polynomials and $J(p,q)=\frac{\partial{p}}{\partial{x}} \frac{\partial{q}}{\partial{y}} - \frac{\partial{q}}{\partial{x}} \frac{\partial{p}}{\partial{y}}$.
On the other hand, Pincus constructed an $L^1$ function called the principal function rather than a measure in \cite{Pincus1968} earlier than \cite{heltonhowe1973}. Later in \cite{cpexponentialformula}, Carey and Pincus showed that the Helton--Howe measure and the principal function agree and hence that the Helton--Howe measure is absolutely continuous with respect to the Lebesgue measure. Furthermore, in \cite{pincusmosaics}, they generalized the above trace formula to a von Neumann algebra with a semifinite trace. 

Thanks to the Helton--Howe trace formula, if we determine $P_T$, we can calculate the trace of an operator which is hard to compute by other methods. However, there are only a few kinds of almost normal operators whose Helton--Howe measures are well-described. Since the Helton--Howe measure is absolutely continuous, we may determine $P_T$ if we have a plenty of information on the spectrum of $T$. To the best of the author's knowledge, \cite{Carey1979MeanMotion} is the single reference that determines the Helton--Howe measures of almost normal operators without rich spectral description. In Theorem 6 of that paper, it was shown that the Helton--Howe measure of some Toeplitz operators in a type II$_\infty$ factor is constant times the mean motion.

The goal of this paper is to determine the Helton--Howe measure of almost normal Toeplitz operators on the Hardy space $H^2(\mathbb{T})=\{ \phi \in L^2(\mathbb{T})~|~ \hat{\phi}(k)=0~~ \forall k<0 \}$. If $\phi$ is a smooth function on the unit circle, the essential spectrum of $T_\phi$ is $\phi(\mathbb{T})$, which has 0 Lebesgue measure. Hence by the index formula for the Helton--Howe measure established in \cite{heltonhowe1973},
$$dP_{T_\phi}=-\frac{1}{2\pi i} \ind(T_\phi-x-iy)dxdy$$
holds with $\ind(T)$ being the index of a Fredholm operator $T$.
We know that if $x+iy$ is not in the essential spectrum of $T_\phi$, $\ind(T_\phi-x-iy)$ is equal to $-\mathrm{wind}(\phi, x+iy)$, minus the winding number of $\phi$ around $x+iy$. Moreover, let $\Phi$ be the harmonic extension of $\phi$ on the unit disc and $J(\Phi)$ be the Jacobian of the mapping $\Phi: \mathbb{D}\to \mathbb{R}^2$. Then, $\mathrm{wind}(\phi, x+iy)$ equals the following signed multiplicity function.
$$m_\Phi(x+iy)=\sum_{\Phi(z)=x+iy, |z|<1} \mathrm{sgn}(J(\Phi)(z)).$$
So, the Helton--Howe measure of a Toeplitz operator with a smooth symbol is determined by $m_\Phi$. 

 The main theorem of this paper shows that the similar result holds for an almost normal Toeplitz operator with a non-smooth symbol $\phi$. To state the result, we use an approximation by $\phi_r$'s, the convolution of $\phi$ and the Poisson kernel. Theorem \ref{general formula} reveals
$$dP_{T_{\phi}}=\mathrm{weak}^*\text{-}\lim_{r \nearrow1} dP_{T_{\phi_r}}=\mathrm{weak}^*\text{-}\lim_{r \nearrow1} \frac{1}{2\pi i}m_{\Phi_r}(x+iy)dxdy.$$
The limit in the above formula is with respect to the weak-$*$ topology of $C(K)^*$ for some compact set $K\subset\mathbb{C}$. In the case $\int _\mathbb{D}|J(\Phi)|dxdy<\infty$, we can get rid of the limit and obtain
 $$dP_{T_{\phi}}=\frac{1}{2\pi i}m_{\Phi}(x+iy)dxdy.$$
 The integrability of $J(\Phi)$ is ensured if $\Re{\phi}\in B_p$ and $\Im{\phi}\in B_q$ where $p,q>1$ are finite H\"{o}lder conjugate numbers and $B_p:=B_{p,p}^{1/p}(\mathbb{T})$ is the Besov space.
In summary, the Helton--Howe measure of an almost normal Toeplitz operator can be described  somehow in terms of the signed multiplicity function.

\section{Notation and Preliminaries}

In this paper, $\mathcal{H}$ denotes a separable infinite dimensional complex Hilbert space and every operator is in $\mathcal{B(\mathcal{H})}$, the algebra of all the bounded operators. The inner product on $\mathcal{H}$ is linear with respect to the first variable.
For an operator $T$, $\sigma_e(T)$ is the essential spectrum of $T$.
If $T$ is a Fredholm operator, $\ind(T)$ is the Fredholm index of $T$.

We will discuss Schatten $p$-classes and other normed ideals. For a comprehensive reference for normed ideals, see \cite{simon2005trace} or \cite{GohK69}.

\begin{definition}
We denote by $\mathcal{J}_p$ the ideal of Schatten $p$-class operators and by $||\cdot||_p$ the Schatten $p$-norm.
Namely, $\mathcal{J}_1$ is the trace class, and $\mathcal{J}_2$ is the Hilbert--Schmidt class. We denote the trace on $\mathcal{J}_1$ by $\tr:\mathcal{J}_1\to \mathbb{C}.$
\end{definition}

\begin{proposition}[Corollary 3.8 of \cite{simon2005trace}]\label{properties of tr}
If $A,B\in \mathcal{B(\mathcal{H})}$ have the property that both $AB$ and $BA$ are in $\mathcal{J}_1$, then $\tr(AB)=\tr(BA)$.
\end{proposition}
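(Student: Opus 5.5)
The plan is to reduce to the case where one factor is finite rank and then pass to a limit using continuity of the trace in the appropriate norms. The difficulty is that neither $A$ nor $B$ is assumed to be in any Schatten class; only the products $AB$ and $BA$ are trace class. Truncating one factor by finite-rank projections avoids this, since it produces products that are trace class for trivial reasons.

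\textbf{Step 1: the finite-rank case.} Take an increasing sequence of finite-rank orthogonal projections $P_n \to I$ strongly, for example projections onto the span of the first $n$ vectors of an orthonormal basis $\{e_k\}$. Set $A_n = AP_n$. Then $A_nB = AP_nB$ and $BA_n = BAP_n$ are finite rank. For finite-rank operators the identity $\tr(A_nB)=\tr(BA_n)$ follows from the matrix-unit computation, or from writing $A_n=\sum_{k\le n} (\cdot,e_k)Ae_k$ and checking the identity for rank-one operators $x\otimes y$. In the rank-one case both traces equal $(Bx,y)$, up to the inner-product convention.

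\textbf{Step 2: convergence of $\tr(BAP_n)$.} Since $BA\in\mathcal{J}_1$ and $P_n\to I$ strongly, we have $\|BAP_n - BA\|_1\to 0$. This is the standard fact that for $K\in\mathcal{J}_1$ and uniformly bounded $X_n\to X$ strongly, $\|KX_n - KX\|_1\to 0$. It is proved by approximating $K$ in $\|\cdot\|_1$ by finite-rank operators, and using $X_n^*\to X^*$ strongly on finitely many vectors, which holds here because $P_n$ is self-adjoint. Since $\tr$ is $\|\cdot\|_1$-continuous, $\tr(BAP_n)\to\tr(BA)$.

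\textbf{Step 3: convergence of $\tr(AP_nB)$.} This is the main obstacle, because $AP_nB$ is not of the form (trace-class)$\times$(something) in an obvious way. Instead I compute the traces in the basis directly. We have
\[
\tr(AP_nB)=\tr(P_nBAP_n)=\sum_{k\le n}(BAe_k,e_k),
\]
where the first equality is the finite-rank cyclicity from Step 1, applied to the finite-rank operators $P_n$ and $P_nBA P_n$, or simply because both sides equal $\sum_{k\le n}(BAe_k, e_k)$ after writing $P_n=\sum_{k\le n} e_k\otimes e_k$. The point is to route everything through $BA$, which is trace class. Then $\sum_{k\le n}(BAe_k,e_k)\to\tr(BA)$, because the trace of a trace-class operator is the absolutely convergent sum of its diagonal entries in any orthonormal basis.

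The same reasoning gives $\tr(BAP_n)=\sum_{k\le n}(BAe_k,e_k)$, so both traces agree with the partial sums. On its own, though, this only shows $\tr(AP_nB)\to\tr(BA)$. We still need the limit to be $\tr(AB)$, and this is where the hypothesis $AB\in\mathcal{J}_1$ enters.

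To fix this, write the diagonal sum of $AB$ in a basis adapted to it. Let $AB=\sum_j s_j (\cdot, u_j)v_j$ be a Schmidt decomposition. Then
\[
\tr(AB)=\sum_j s_j (v_j,u_j),
\]
and symmetrically
\[
\tr(BA)=\sum_j s_j (Bv_j, A^*u_j)/s_j\ \text{-type expressions}.
\]
This symmetric route is messy. The cleaner route is to truncate on both sides. Use $\tr(AB)=\lim_n\tr(P_nABP_n)=\lim_n\sum_{k\le n}(ABe_k,e_k)$, and compare it with $\sum_{k\le n}(BAe_k,e_k)$ via the double sums
\[
\sum_{k\le n}(ABe_k,e_k)=\sum_{k\le n}\sum_{j}(Be_k,e_j)(Ae_j,e_k),
\qquad
\sum_{k\le n}(BAe_k,e_k)=\sum_{k\le n}\sum_{j}(Ae_k,e_j)(Be_j,e_k).
\]
These are the two orders of summation of the same array $a_{kj}b_{jk}$, truncated in different indices. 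Justifying the interchange is the genuine crux.

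I would handle it by choosing the basis cleverly. Take $\{e_k\}$ to be an orthonormal basis diagonalizing the self-adjoint part structure of $BA$ via polar decomposition, so that the relevant double series converge absolutely. If that fails, I would fall back on the Lidskii-free argument of Simon's book: approximate $A$ by $A_n=AP_n$ and $B$ by $B_n=P_nB$ simultaneously. Then $\tr(A_nB_n)=\tr(B_nA_n)$ exactly, since the operators are finite rank, and it remains to show $\tr(AP_nB)\to\tr(AB)$ and $\tr(P_nBAP_n)\to\tr(BA)$. The second limit was established above. The first limit is exactly what has to be proved, and I expect it to require the full hypothesis $AB\in\mathcal{J}_1$ together with a careful diagonal-sum argument.
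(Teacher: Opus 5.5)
The proposal has a genuine gap. Your Steps 1 and 2, and the first half of Step 3, are sound, but together they only establish $\tr(AP_nB)=\tr(P_nBAP_n)\to\tr(BA)$. The claim you leave open, $\tr(AP_nB)\to\tr(AB)$, is not a technical lemma. Given what you have already shown, it is \emph{equivalent} to the proposition, so the argument stops exactly where the content lies. None of your suggested continuations closes it:
\begin{itemize}
\item The Schmidt-decomposition line is not a formula.
\item The two iterated sums of the array $c_{jk}=(Be_k,e_j)(Ae_j,e_k)$ are exactly $\tr(AB)$ and $\tr(BA)$. Interchanging them is therefore just a restatement of the claim. Fubini would need absolute summability of $(c_{jk})$, for which you give no mechanism.
\item A basis ``adapted to $BA$'' gives no control over $A$ and $B$ separately.
\end{itemize}
The structural obstruction is that a coordinate projection $P_n$ inserted between $A$ and $B$ does not let you write $AP_nB$ as a strongly convergent bounded operator times $AB$. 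Consequently $\|A(1-P_n)B\|_1$ is out of reach. Your fallback is also misattributed: it is not ``Lidskii-free.'' The paper gives no proof, only a citation to Simon (Cor.~3.8), and there the result is deduced from Lidskii's theorem. That proof uses the fact that $AB$ and $BA$ have the same nonzero eigenvalues with the same algebraic multiplicities.

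If you want an elementary proof, the missing idea is to cut off with spectral projections of $|A|$ instead of coordinate projections.
\begin{itemize}
\item Write $A=V|A|$ and $E_\epsilon=\chi_{[\epsilon,\infty)}(|A|)$. Since $|A|B=V^*AB\in\mathcal{J}_1$ and $f_\epsilon(t)=t^{-1}\chi_{[\epsilon,\infty)}(t)$ is bounded, we get $E_\epsilon B=f_\epsilon(|A|)V^*AB\in\mathcal{J}_1$.
\item Elementary cyclicity (one factor trace class, the other bounded) then gives $\tr(AE_\epsilon B)=\tr(E_\epsilon BA)=\tr(BAE_\epsilon)$.
\item On the left, $AE_\epsilon B=VE_\epsilon|A|B=(VE_\epsilon V^*)AB$. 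Here $VE_\epsilon V^*\to VV^*$ strongly and $VV^*AB=AB$, so $AE_\epsilon B\to AB$ in $\|\cdot\|_1$.
\item On the right, $BAE_\epsilon\to BAE_0=BA$ in $\|\cdot\|_1$, where $E_0=V^*V$ is the projection onto $(\ker A)^\perp$.
\end{itemize}
Letting $\epsilon\downarrow 0$ yields $\tr(AB)=\tr(BA)$. The hypothesis $AB\in\mathcal{J}_1$ makes $E_\epsilon B$ trace class and controls the left-hand limit, and $BA\in\mathcal{J}_1$ controls the right-hand limit.
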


\begin{definition}
Let $\Psi$ be a symmetric norm. We denote the corresponding maximal normed ideal by $(J_\Psi,||\cdot||_{J_\Psi})$. $J_\Psi^{(0)}$ is the separable ideal which is the closure in $J_\Psi$ of finite rank operators. $\Psi^*$ is the conjugate symmetric norm of $\Psi$. 
\end{definition}

Let $\mathbb{D} $ be the unit disc $\{ z\in \mathbb{C} ~|~ |z|<1\}$ and $\mathbb{T}$ be its boundary. We regard $\mathbb{T}$ as a measure space endowed with the normalized Lebesgue measure $\frac{dt}{2\pi}$.

The $k$-th Fourier coefficient of $ \phi \in L^1(\mathbb{T})$ is 
$$\hat{\phi}(k)=\frac{1}{2\pi}\int_0^{2\pi}\phi(e^{it})e^{-ikt}dt.$$

Recall that the Hardy space $H^2(\mathbb{T})=\{ \phi \in L^2(\mathbb{T})~|~ \hat{\phi}(k)=0~~ \forall k<0 \}$ is a closed subspace of $L^2(\mathbb{T})$ and denote by $P_+$ the orthogonal projection from $L^2(\mathbb{T})$ onto $H^2(\mathbb{T})$. 
For $\phi \in L^\infty (\mathbb{T})$, consider the multiplication operator $M_\phi$ on $L^2(\mathbb{T})$. 
The Toeplitz operator $T_\phi$ and the Hankel operator $H_\phi$ in $ B(L^2(\mathbb{T}))$ with the symbol $\phi$ are defined by $T_\phi=P_+ M_\phi P_+$ and by $H_\phi=(1-P_+)M_\phi P_+$, respectively.

For $\phi\in C(\mathbb{T})$ and $\lambda\notin\phi(\mathbb{T})$, $\mathrm{wind}(\phi,\lambda)$ is the winding number of $\phi$ around $\lambda$. A detailed description of the winding number can be found in Chapter 4.4 of \cite{arvesonspectral} in which the notation $\#(\phi-\lambda):=\mathrm{wind}(\phi,\lambda)$ is used.

If $\phi \in L^\infty(\mathbb{T})$, we use the symbol $\phi_r$ to denote the convolution of $\phi$ and the Poisson kernel, and $\Phi$ to denote the harmonic extension of $\phi$ on the unit disc.
That is, $\Phi:\mathbb{D}\to \mathbb{C}$ is defined by 
$$ \Phi(re^{i\theta})=\phi_r(e^{i\theta})=\sum_{k\geq 0}\hat{\phi}(k)r^ke^{ik\theta}+\sum_{k>0}\hat{\phi}(-k)r^ke^{-ik\theta}$$
for $r<1$.
As above, we use small letters to represent functions on the unit circle, while capital letters indicate their harmonic extensions.

\begin{definition}\label{def of m}
Assume $\phi$ is an $L^\infty$ function on $\mathbb{T}$ and that $\#\Phi^{-1}(\{w\})$ is finite for almost all $w\in \mathbb{C}$. Then, we define the signed multiplicity function $m_\Phi$  by
$$m_\Phi(w)=\sum_{\Phi(z)=w, |z|<1} \mathrm{sgn}(J(\Phi)(z)),$$
where $J(\Phi)=\left|\frac{\partial \Phi}{\partial z}\right|^2-\left|\frac{\partial \Phi}{\partial \bar{z}}\right|^2$ is the Jacobian of $\Phi$.
\end{definition}

\begin{lemma}\label{existence criterion}
If
$\int _\mathbb{D}|J(\Phi)|dxdy<\infty$, $m_\Phi$ exists as an $L^1$ function.
\end{lemma}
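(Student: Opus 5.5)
The plan is to obtain the lemma from the area (Banach) formula for the real-analytic map $\Phi:\mathbb{D}\to\mathbb{R}^2$. Since $\phi\in L^\infty(\mathbb{T})$, its harmonic extension $\Phi$ is real-analytic (in particular $C^\infty$) on $\mathbb{D}$. The area formula for Lipschitz maps, applied on compact subdiscs $\overline{D_\rho}=\{|z|\le\rho\}$ with $\rho<1$, gives
$$\int_{D_\rho}|J(\Phi)|\,dx\,dy=\int_{\mathbb{C}}\#\bigl(\Phi^{-1}(\{w\})\cap D_\rho\bigr)\,dA(w).$$
Letting $\rho\nearrow1$ and applying monotone convergence on both sides, we get
$$\int_{\mathbb{C}}N_\Phi(w)\,dA(w)=\int_{\mathbb{D}}|J(\Phi)|\,dx\,dy<\infty,\qquad N_\Phi(w)=\#\Phi^{-1}(\{w\}).$$
Hence $N_\Phi\in L^1(\mathbb{C})$, and in particular $N_\Phi(w)<\infty$ for almost every $w$. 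This is exactly the finiteness hypothesis of Definition \ref{def of m}, so $m_\Phi(w)$ is defined almost everywhere.

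Two further points are needed.

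\textbf{Measurability and integrability.} Let $Z=\{z\in\mathbb{D}: J(\Phi)(z)=0\}$. The area formula applied to the indicator function of $Z$ shows that $\Phi(Z)$ is a null set; this is Sard's theorem in this setting. Therefore, for almost every $w$, every preimage point is a regular point, and each term $\mathrm{sgn}(J(\Phi)(z))$ is $\pm1$. Now apply the general area formula
$$\int_{\mathbb{D}}g(z)\,|J(\Phi)(z)|\,dx\,dy=\int_{\mathbb{C}}\sum_{\Phi(z)=w}g(z)\,dA(w)$$
with $g=\mathbf{1}_{\{J(\Phi)>0\}}$ and with $g=\mathbf{1}_{\{J(\Phi)<0\}}$. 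This shows that the counting functions $N^\pm_\Phi(w)$ of positively and negatively oriented preimages are measurable and integrable. Consequently $m_\Phi=N^+_\Phi-N^-_\Phi$ is a measurable function with
$$\|m_\Phi\|_{L^1}\le\int_{\mathbb{D}}|J(\Phi)|\,dx\,dy.$$
As a byproduct we also obtain the identity $\int_{\mathbb{C}}m_\Phi\,dA=\int_{\mathbb{D}}J(\Phi)\,dx\,dy$.

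\textbf{Main obstacle.} The only delicate point is justifying the area formula on the open disc, where $\Phi$ need not be Lipschitz up to the boundary. I will handle this by the exhaustion $D_\rho\nearrow\mathbb{D}$ described above. On each $D_\rho$, $\Phi$ is smooth up to the boundary, so the classical change-of-variables/area formula applies: one can cover the regular set by countably many open sets on which $\Phi$ is a diffeomorphism, and use Sard for the critical set. Passing to the limit $\rho\nearrow1$ is then justified by monotone convergence, since the preimage counts increase with $\rho$. Everything else is bookkeeping.
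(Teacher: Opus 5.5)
Your proposal is correct and follows the same route as the paper: both rest on the multiplicity change-of-variables (area) formula $\int_{\mathbb{C}}\#\Phi^{-1}(\{w\})\,dA(w)=\int_{\mathbb{D}}|J(\Phi)|\,dx\,dy$, which gives a.e.\ finiteness of the preimage count and the $L^1$ bound via $|m_\Phi(w)|\le\#\Phi^{-1}(\{w\})$. Your exhaustion by $D_\rho$ and the measurability argument through $N^\pm_\Phi$ supply details the paper leaves implicit. The Sard step is harmless but not strictly needed, since critical preimages contribute $\mathrm{sgn}(0)=0$ anyway.
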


\begin{proof}
Changing the variables, we have
\begin{align*}
\int_{{\Phi(\mathbb{D})}} |m_{\Phi}(x+iy)|dxdy &\leq \int_{{\Phi(\mathbb{D})}} \left(\sum_{\Phi(z)=x+iy, |z|<1} 1\right)dxdy\\
&=\int _\mathbb{D}|J(\Phi)|dxdy<\infty.
\end{align*}
This implies that $\#\Phi^{-1}(\{w\})<\infty$ almost everywhere, hence $m_\Phi$ is well-defined.
\end{proof}
\begin{corollary}
If $\phi\in C^\infty(\mathbb{T})$, $m_\Phi$ exists as an $L^1$ function.
\end{corollary}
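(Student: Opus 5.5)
The plan is to reduce the corollary to Lemma \ref{existence criterion}: it suffices to show $\int_\mathbb{D}|J(\Phi)|\,dxdy<\infty$ whenever $\phi\in C^\infty(\mathbb{T})$. The natural route is to show that $\Phi$ has bounded first derivatives on the whole disc, so that $J(\Phi)$ is bounded on $\mathbb{D}$ and hence integrable, since $\mathbb{D}$ has finite area.

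Concretely, I will split $\Phi=F+\overline{G}$, where
$$F(z)=\sum_{k\ge 0}\hat\phi(k)z^k,\qquad G(z)=\sum_{k>0}\overline{\hat\phi(-k)}z^k$$
are holomorphic on $\mathbb{D}$. Then $\partial\Phi/\partial z=F'$ and $\partial\Phi/\partial\bar z=\overline{G'}$, so
$$J(\Phi)=|F'|^2-|G'|^2 .$$
Since $\phi$ is smooth, its Fourier coefficients decay rapidly. In particular $\sum_k |k|\,|\hat\phi(k)|<\infty$, which follows for instance from $|\hat\phi(k)|\le C|k|^{-3}$ after integrating by parts twice. Consequently, for $|z|<1$,
$$|F'(z)|\le \sum_{k\geq 1} k|\hat\phi(k)|<\infty \quad\text{and}\quad |G'(z)|\le \sum_{k\geq 1}k|\hat\phi(-k)|<\infty .$$
Thus $|J(\Phi)|$ is uniformly bounded on $\mathbb{D}$, and $\int_\mathbb{D}|J(\Phi)|\,dxdy\le \pi\sup_{\mathbb{D}}|J(\Phi)|<\infty$. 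Lemma \ref{existence criterion} then gives the conclusion.

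There is no genuine obstacle. The only point needing care is the uniform bound on the derivatives up to the boundary, and this is exactly what the summability of $k\hat\phi(k)$ provides. One could alternatively cite the fact that the Poisson extension of a smooth function is smooth on the closed disc, but the Fourier-coefficient estimate is self-contained and avoids any appeal to boundary regularity theory.
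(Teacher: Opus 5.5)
Your proposal is correct and is essentially the paper's argument: the paper states this corollary without proof as an immediate consequence of Lemma~\ref{existence criterion}, and the implicit reason is yours, namely that for smooth $\phi$ the first derivatives of $\Phi$ are bounded on $\mathbb{D}$, so $J(\Phi)$ is bounded and hence integrable over the disc. One minor slip: integrating by parts twice only gives $|\hat\phi(k)|\le C|k|^{-2}$, which does not make $\sum_k |k|\,|\hat\phi(k)|$ converge, so you need three integrations by parts (available since $\phi\in C^\infty(\mathbb{T})$), or two combined with Cauchy--Schwarz and Parseval.
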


\section{The Helton--Howe Theory}

\subsection{The Helton--Howe Trace Formula}
Below is a brief summary of properties of the Helton--Howe measure.
\begin{definition}
We say that an operator $T$ with the Cartesian decomposition $X+iY$ is almost normal if its self-commutator $[T^*,T]=2i[X,Y]$ is in the trace class. 
\end{definition}

\begin{definition}
For $C^\infty$ functions $f,g$ defined on some region of $\mathbb{R}^2$, define
$$J(f,g):=\frac{\partial{f}}{\partial{x}} \frac{\partial{g}}{\partial{y}} - \frac{\partial{g}}{\partial{x}} \frac{\partial{f}}{\partial{y}}.$$
This is the Jacobian if $f,g$ are real-valued. We are just using the same notation even if functions involved are not real-valued.
\end{definition}

\begin{theorem}[Formula (1') in the main theorem of~\cite{heltonhowe1973}]\label{HH trace formula}
For every almost normal operator $T=X+iY$, there exists a unique regular Borel (purely imaginary) measure $P_T$ on the complex plane supported on $\sigma(T)$ such that
$$\tr([p(X,Y), q(X,Y)])=\int_{\sigma(T)} J(p,q)dP_T$$
where $p,q$ are polynomials in two variables.
\end{theorem}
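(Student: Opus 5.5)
The statement is the Helton--Howe theorem itself, so the plan is to reconstruct the standard Helton--Howe argument. The approach is to define a functional on polynomials, prove that it is a Jacobian-type functional, and then represent it by a measure.

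\emph{Step 1: the functional and its trace-class well-definedness.} For polynomials $p,q$, set $L(p,q)=\tr([p(X,Y),q(X,Y)])$, where the ordering of $X$ and $Y$ in $p(X,Y)$ is fixed, say all $X$'s to the left. Since $[X,Y]\in\mathcal{J}_1$, the map $A\mapsto A+\mathcal{J}_1$ makes $X$ and $Y$ commute modulo trace class. Hence $p(X,Y)$ is well defined modulo $\mathcal{J}_1$, and $[p(X,Y),q(X,Y)]\in\mathcal{J}_1$. Expanding $[p,q]$ by the Leibniz rule writes it as a sum of terms $A[X,Y]B$. Using Proposition \ref{properties of tr} to cycle factors, each term has trace $\tr(BA[X,Y])$, where $BA$ can be reordered freely modulo $\mathcal{J}_1$; reordering is harmless because $\mathcal{J}_1\cdot[X,Y]$ terms are trace class. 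Collecting terms yields the key identity
$$\tr([p(X,Y),q(X,Y)])=\tr\big(J(p,q)(X,Y)\,[X,Y]\big).$$
The derivatives arise because a product of $x^ay^b$ against $x^cy^d$ contributes $(ad-bc)x^{a+c-1}y^{b+d-1}$. This identity also shows that $L$ is independent of the chosen ordering.

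\emph{Step 2: the positivity/boundedness estimate.} Define $\ell(f)=\tr(f(X,Y)[X,Y])$ for polynomials $f$. Since $J(p,q)$ ranges over all polynomials (take $p=x$ and $q$ an antiderivative in $y$), we need $|\ell(f)|\le C\sup_{\sigma(T)}|f|$. Write $[X,Y]=\sum\lambda_j\langle\cdot,e_j\rangle e_j$ with $\sum|\lambda_j|<\infty$. Then $\ell(f)=\sum\lambda_j\langle f(X,Y)e_j,e_j\rangle$, so it suffices to bound $\|f(X,Y)\|$ modulo compacts. In the Calkin algebra, $\pi(X)$ and $\pi(Y)$ commute, and a functional calculus gives $\|\pi(f(X,Y))\|=\sup_{\sigma_e(T)}|f|$. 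The compact error, however, is not controlled inside a trace against an arbitrary trace-class operator. This is the main obstacle.

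\emph{Step 3: handling the main obstacle.} Helton--Howe's approach, which I would follow, is to show that $\ell$ is a distribution of order zero by using the determinantal/Pincus-type bound. For $f=J(p,q)$, Step 1 gives $\ell(f)=\tr[p,q]$. One then approximates by the normal dilation structure: choose a normal $N$ and a trace-class perturbation, or alternatively prove $|\tr(f(X,Y)[X,Y])|\le\|[X,Y]\|_1\,\|f\|_{C^0(\sigma(T))}$ via the Berger--Shaw/Carey--Pincus route, in which $\ell$ is written as $\frac{1}{\pi}\int f\,g\,dxdy$ with $|g|\le\|[X,Y]\|_1$. As a fallback, I would prove only that $\ell$ extends continuously in the $C^\infty$ topology, using the Helton--Howe estimate
$$|\tr[p,q]|\le C\,\|[X,Y]\|_1\,\|\nabla p\|_\infty\|\nabla q\|_\infty.$$
That estimate follows from Step 1 with a polynomial approximation on a neighborhood of $\sigma(T)$. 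The bound yields a compactly supported distribution, and Step 1 shows that its support lies in $\sigma(T)$: if $f$ vanishes near $\sigma(T)$, Riesz-type resolvent arguments give $\ell(f)=0$. A further argument then gives order zero, hence a measure.

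\emph{Step 4: representation and uniqueness.} Riesz representation applied to the bounded functional yields $P_T$, regular Borel and supported on $\sigma(T)$. It is purely imaginary because $[X,Y]$ is skew-adjoint, so $\tr(f[X,Y])\in i\mathbb{R}$ for real $f$. Uniqueness follows from Stone--Weierstrass: the polynomials $J(p,q)$ are all polynomials, which are dense in $C(\sigma(T))$.
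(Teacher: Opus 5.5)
The paper does not prove this theorem; it quotes it from Helton--Howe. Your reconstruction therefore has to stand on its own, and it has a genuine gap already in Step 1. Reordering a polynomial in $X,Y$ changes it by some $C\in\mathcal{J}_1$, and $C[X,Y]$ is trace class, but its trace need not vanish: $\tr\big((XY-YX)[X,Y]\big)=\tr([X,Y]^2)=-\|[X,Y]\|_2^2$. Concretely, take $p=x^2y$, $q=y$. Then $[X^2Y,Y]=X[X,Y]Y+[X,Y]XY$, so $\tr[p(X,Y),q(X,Y)]=\tr\big((XY+YX)[X,Y]\big)$. Your $X$-left ordering of $J(p,q)=2xy$ gives $2\tr(XY[X,Y])$ instead. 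The two differ by $\|[X,Y]\|_2^2$, which equals $1/4$ for the unilateral shift.

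So $\ell(f)=\tr(f(X,Y)[X,Y])$ depends on the ordering, fails $\ell(J(p,q))=\tr[p,q]$, and is not even purely imaginary for real $f$ (e.g.\ $2\,\mathrm{Re}\,\ell(xy)=-\|[X,Y]\|_2^2$). This also breaks your Step 4 argument for imaginarity. What is true is weaker. The form $L(p,q)=\tr[p(X,Y),q(X,Y)]$ is ordering-independent because $\tr[C,B]=0$ for $C\in\mathcal{J}_1$. It is antisymmetric and satisfies $L(p_1p_2,q)=L(p_1,p_2q)+L(p_2,p_1q)$. An algebraic argument (the polynomial Poincar\'e lemma) then gives $L=\tau\circ J$ for a unique linear functional $\tau$, namely $\tau(x^ay^b)=\frac{1}{(a+1)(b+1)}\tr[X^{a+1},Y^{b+1}]=\frac{1}{b+1}\sum_{j=0}^{b}\tr(Y^{b-j}X^aY^j[X,Y])$. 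This $\tau$ is what $P_T$ must represent, and it has the form $\tr(\tilde f(X,Y)[X,Y])$ only for a specific averaged ordering $\tilde f$.

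The second, more serious gap is that the analytic heart of the theorem is never established: that $\tau$ has order zero, $|\tau(f)|\le C\sup_{\sigma(T)}|f|$, and is supported in $\sigma(T)$. You correctly note that compact errors are not controlled under the trace, but none of your ways around this works.
\begin{itemize}
\item An almost normal operator need not be a compact perturbation of a normal one: for the shift, $\ind(S-\lambda)=-1$ on $\mathbb{D}$. So there is no normal $N$ to perturb from.
\item Invoking the Carey--Pincus principal function presupposes a strictly stronger theorem (it yields Theorem \ref{a.c.}), so it is circular. The pointwise bound $|g|\le\|[X,Y]\|_1$ is also false: for $T=\varepsilon S$ one has $\|[X,Y]\|_1=\varepsilon^2/2$, while $P_T=\frac{1}{2\pi i}dxdy$ on $\varepsilon\mathbb{D}$ has density independent of $\varepsilon$. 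Only an $L^1$ bound can hold.
\item Your fallback estimate cannot come from Step 1. Even the corrected identity only gives $|\tau(f)|\le\|\tilde f(X,Y)\|\,\|[X,Y]\|_1$, and for non-commuting $X,Y$ the norm $\|\tilde f(X,Y)\|$ is not controlled by the values of $f$ on $\sigma(T)$ or on a neighbourhood of it.
\end{itemize}
Extending $\tau$ to smooth functions needs a genuine smooth functional calculus (e.g.\ of Weyl type). The sentences ``Riesz-type resolvent arguments give $\ell(f)=0$'' and ``a further argument then gives order zero'' are precisely the content of the theorem, and of Brown's bound $\|P_T\|\le\frac12\|[T^*,T]\|_1$, and they are left unproved. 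Step 4 (Riesz representation, and uniqueness via density of polynomials) is fine once such a bounded, correctly supported functional is available.
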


\begin{definition}
We call $P_T$ in Theorem \ref{HH trace formula} the Helton--Howe measure of $T$. 
\end{definition}

\begin{theorem}[Assertion iii in the main theorem of~\cite{heltonhowe1973}]\label{index formula}
Let $T$ be an almost normal operator.
If $U\subset \mathbb{C}$ is a connected component of $\mathbb{C}\setminus \sigma_e(T)$ and if $\lambda \in U$, then on $U$,
$$
\frac{dP_T}{dxdy}=-\frac{1}{2\pi i}\ind(T-\lambda).$$
\end{theorem}

\begin{theorem}[essentially due to Theorem 2 of \cite{cpexponentialformula}]\label{a.c.}
The Helton--Howe measure $P_T$ is absolutely continuous with respect to the Lebesgue measure.
\end{theorem}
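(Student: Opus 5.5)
Theorem \ref{a.c.} is attributed to Carey--Pincus, so my plan is to reduce it to two inputs: the identification of $P_T$ with the Pincus principal function, and the fact that the principal function is an $L^1$ function.

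\textbf{Step 1 (the Pincus determinant).} For $T=X+iY$ almost normal, Carey and Pincus form the determining function
$$E_T(z,w)=\det\bigl((T-z)(T^*-\bar w)(T-z)^{-1}(T^*-\bar w)^{-1}\bigr),\qquad z,w\notin\sigma(T).$$
The operator inside the determinant is $1+{}$(trace class), because $[T,T^*]\in\mathcal{J}_1$, so the Fredholm determinant is defined. The exponential formula (Theorem 2 of \cite{cpexponentialformula}) represents it as
$$E_T(z,w)=\exp\Bigl(\frac{1}{2\pi i}\int_{\mathbb{C}} g_T(\zeta)\,\frac{d\zeta}{\zeta-z}\wedge\frac{d\bar\zeta}{\bar\zeta-\bar w}\Bigr),$$
where $g_T\in L^1(\mathbb{C})$ is compactly supported. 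This $g_T$ is the principal function.

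\textbf{Step 2 (identify $P_T$ with $g_T\,dxdy$).} The functional $(p,q)\mapsto\tr[p(X,Y),q(X,Y)]$ is an antisymmetric derivation in each variable. By the uniqueness part of Theorem \ref{HH trace formula}, it is determined by its values on a separating family of pairs.

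I would take the pairs $p=(t-z)^{-1}$, $q=(\bar t-\bar w)^{-1}$, or rather polynomial approximants of them on $\sigma(T)$. One subtlety here is that these are not polynomials in $X,Y$ directly. I would handle it by expanding in powers for $|z|,|w|$ large, where the Neumann series converges and everything is a limit of polynomial pairs.

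The plan is then as follows.
\begin{itemize}
\item Differentiate $\log E_T$ in $z$ and $\bar w$ and use Proposition \ref{properties of tr}. This gives
$$\partial_z\partial_{\bar w}\log E_T=\tr\bigl[(T-z)^{-1},(T^*-\bar w)^{-1}\bigr]$$
up to sign.
\item Compute the same quantity from the Helton--Howe formula as $\int J\bigl((t-z)^{-1},(\bar t-\bar w)^{-1}\bigr)\,dP_T$.
\item Compare it with the derivative of the exponential formula.
\end{itemize}
This yields the equality of the two Cauchy-type transforms
$$\int\frac{dP_T(\zeta)}{(\zeta-z)^2(\bar\zeta-\bar w)^2}\quad\text{and}\quad\frac{1}{2\pi i}\int\frac{g_T(\zeta)\,d\zeta\wedge d\bar\zeta}{(\zeta-z)^2(\bar\zeta-\bar w)^2}$$
for all large $z,w$.

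\textbf{Step 3 (uniqueness).} Expand both transforms at infinity. Their equality gives equality of all moments $\int \zeta^m\bar\zeta^n\,d\mu$ for the two compactly supported measures. Polynomials in $\zeta,\bar\zeta$ are dense in $C(K)$ by Stone--Weierstrass, so the measures coincide. Hence $dP_T=c\,g_T\,dxdy$ with the correct normalization constant (purely imaginary, consistent with Theorem \ref{index formula}), and $P_T$ is absolutely continuous.

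\textbf{Main obstacle.} Step 1 itself is the deep part, namely the fact that $g_T$ is genuinely in $L^1$. That is Carey--Pincus's theorem, and I would cite it rather than reprove it. Within the argument I give, the delicate point is justifying the trace-class manipulations with resolvents in Step 2, and in particular passing from polynomials to rational functions in the Helton--Howe formula. I would do this via norm-convergent Neumann series together with the estimate $\|[p(X,Y),q(X,Y)]\|_1\le C\,\|[X,Y]\|_1$, with a constant $C$ depending on derivative bounds of $p$ and $q$.
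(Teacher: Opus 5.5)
Your proposal is correct and takes the same route as the paper, which gives no independent argument and simply invokes Carey--Pincus \cite{cpexponentialformula}: the principal function $g_T$ is in $L^1$ and, up to the normalising factor $\frac{1}{2\pi i}$, it is the density of $P_T$. Your Steps 2--3 just make this identification explicit by matching moments, which is sound; it could be done more directly, without resolvents, by applying Theorem~\ref{HH trace formula} to $p=(x+iy)^{m+1}$ and $q=(x-iy)^{n+1}$ (so that $J(p,q)=-2i(m+1)(n+1)\zeta^m\bar\zeta^n$) and comparing with the corresponding Carey--Pincus trace formula for $g_T$.
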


 Theorem \ref{index formula} and Theorem \ref{a.c.} imply that the Helton--Howe measure of a compact almost normal operator is $0$ and that the Helton--Howe measure of the unilateral shift is $\frac{1}{2\pi i}dxdy|_\mathbb{D}.$

\subsection{Estimates of the Total Variation}

We give sophisticated results on the total variation of the Helton--Howe measure.

\begin{definition}
We denote by $||P_T||=|P_T|(\mathbb{C})$ the total variation of $P_T$. 
\end{definition}

\begin{proposition}[A remark by Larry Brown in \cite{heltonhowe1973}]
Let $T$ be an almost normal operator.
The total variation of its Helton--Howe measure is bounded by $\frac{||[T^*,T]||_1}{2}$.
\end{proposition}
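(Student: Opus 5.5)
The plan is to pass from the measure to its density and bound that density using the operator data. By Theorem \ref{a.c.} we may write $dP_T=-\frac{1}{2\pi i}\,g_T\,dx\,dy$ for a real function $g_T\in L^1(\mathbb{C})$, the Pincus principal function; the normalization is the one forced by Theorem \ref{index formula}. Then $\|P_T\|=\frac{1}{2\pi}\int_{\mathbb{C}}|g_T|\,dx\,dy$, so the claim is equivalent to
$$\int_{\mathbb{C}}|g_T|\,dx\,dy\le \pi\,\|[T^*,T]\|_1 .$$
First I will check the constant on the trace identity. Taking $p=x$ and $q=y$ in Theorem \ref{HH trace formula} gives $J(x,y)=1$, so
$$\tr([X,Y])=P_T(\mathbb{C}).$$
With $[T^*,T]=2i[X,Y]$ this yields $\int g_T\,dx\,dy=\pi\,\tr[T^*,T]$. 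Hence the inequality already holds with equality when $g_T$ has constant sign, for instance when $T$ is hyponormal and $g_T\ge 0$. The whole difficulty is to control cancellation, i.e.\ to show that $|g_T|$ is dominated by something whose integral is $\pi\|[T^*,T]\|_1$.

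For this I would use the Carey--Pincus mosaic, which is the structure behind Theorem \ref{a.c.} as proved in \cite{cpexponentialformula} and \cite{pincusmosaics}. Write the polar decomposition $[T^*,T]=C_+-C_-$ with $C_\pm\ge0$ and $C_+C_-=0$. Let $E_\pm$ be the support projections and set $E=E_++E_-$, so that $\|[T^*,T]\|_1=\tr C_++\tr C_-$. The mosaic is a measurable operator-valued function $B(\lambda)$ acting on $E\mathcal{H}$ with $0\le B(\lambda)\le 1$. I would use two relations from that theory. The first is the trace relation $g_T(\lambda)=\tr\bigl(|[T^*,T]|^{1/2}\,S\,B(\lambda)\,|[T^*,T]|^{1/2}\bigr)$, where $S=E_+-E_-$ is the sign of $[T^*,T]$. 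The second is the ``integral'' relation
$$\int_{\mathbb{C}} B(\lambda)\,dx\,dy\le \pi\cdot 1$$
on the relevant subspace. Both come from the determinant formula $\det\bigl((T-z)(T^*-\bar w)(T-z)^{-1}(T^*-\bar w)^{-1}\bigr)=\exp\bigl(\frac{1}{2\pi i}\int g_T(\lambda)\frac{d\lambda\,d\bar\lambda}{(\lambda-z)(\bar\lambda-\bar w)}\bigr)$.

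Granting these relations, the estimate is short. Since $|\tr(AS B A)|\le \tr(ABA)$ for $A=|[T^*,T]|^{1/2}\ge0$ and $0\le B\le 1$, integrating gives
$$\int|g_T|\le \tr\Bigl(A\Bigl(\int B\Bigr)A\Bigr)\le \pi\,\tr A^2=\pi\,\|[T^*,T]\|_1,$$
which is the claim. The main obstacle is establishing or correctly quoting these mosaic properties, especially the integral bound. If I wanted a self-contained route avoiding mosaics, I would first try a direct duality estimate: show $\bigl|\int f\,dP_T\bigr|\le \tfrac12\|f\|_\infty\|[T^*,T]\|_1$ for smooth $f$, by writing $f=J(p,q)$ via Theorem \ref{HH trace formula} and estimating $\tr[p(X,Y),q(X,Y)]$. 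For $p=x$ and $q=q(y)$, a Fourier/Duhamel computation gives $\tr[X,q(Y)]=\tr(q'(Y)[X,Y])$, which gives exactly the right bound for those $f$. However, $f$ depending on both variables requires a double-operator-integral argument whose constant I am not confident will be sharp, so I would keep the mosaic argument as the primary plan.
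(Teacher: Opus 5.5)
The paper gives no proof of this proposition (it is quoted from Brown's remark in Helton--Howe), so your argument has to stand on its own. As written it does not: all of the content sits in the two ``granted'' mosaic relations.

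For semi-normal $T$ no mosaic is needed, since $\pm iP_T\ge 0$ gives $\|P_T\|=|P_T(\mathbb{C})|=\frac12\|[T^*,T]\|_1$ directly. For general $T$, note that $S=E_+-E_-$ commutes with $A$. Your relations then say exactly that $g_T=g_+-g_-$ with $g_\pm(\lambda)=\tr(E_\pm AB(\lambda)AE_\pm)\ge 0$ and $\int(g_++g_-)\,dx\,dy\le\pi\|[T^*,T]\|_1$. Conversely, the proposition produces such a splitting (take the positive and negative parts of $g_T$). So you have assumed a statement equivalent to the one to be proved, and the ``short estimate'' is a restatement.

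Nor can the input come from where you say. The determinant formula is a scalar identity which determines $g_T$ and nothing more. It cannot produce an operator-valued $B$, its positivity, or the operator inequality $\int B\le\pi$. Those come from an operator-valued exponential representation of the characteristic function on $E\mathcal{H}$, before taking determinants; the integral relation is then the comparison of the coefficients of $1/(z\bar w)$ at infinity. Positivity of $B$ when $[T^*,T]$ is indefinite is precisely the nontrivial theorem you would have to prove or cite exactly. Your fallback does not address the issue either: estimates for $f$ depending on one variable only control a marginal of $P_T$ and cannot detect cancellation in the plane.

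The input is also misquoted, because your three properties are incompatible. Take $T=\frac12$ times the unilateral shift. Then $[T^*,T]=\frac14\,e_0\otimes e_0$, so $A=\frac12$ on $E\mathcal{H}=\mathbb{C}e_0$, while $|g_T|$ is the indicator of the disc of radius $\frac12$. Your trace relation forces $|B|=4$ on that disc, contradicting $0\le B\le 1$; the scaling $T\mapsto cT$ shows the same in general. In the standard normalization for hyponormal $T$ one has:
\begin{itemize}
\item $0\le B\le 1$ on $\overline{\mathrm{ran}}\,[T^*,T]$;
\item $g_T=\pm\tr B$;
\item $\int B\,dx\,dy=\pi[T^*,T]$ on that range.
\end{itemize}
Your relations are the formal conjugate $A^{-1}BA^{-1}$ of these, and since your estimate only uses $B\ge 0$, this part is repairable.

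Finally, there is a sign slip. With $dP_T=-\frac{1}{2\pi i}g_T\,dx\,dy$, Theorem \ref{index formula} gives $g_T=\ind(T-\lambda)$ off $\sigma_e(T)$. Then $\tr[X,Y]=P_T(\mathbb{C})$ together with $[T^*,T]=2i[X,Y]$ gives $\int g_T\,dx\,dy=-\pi\tr[T^*,T]$; for the unilateral shift, $g_T=-\chi_{\mathbb{D}}$. Thus hyponormal operators have $g_T\le 0$ in your normalization, and your trace relation needs a minus sign. This is harmless for $|g_T|$.
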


This estimate turns out to be loose once we consider a Hilbert--Schmidt operator with a nontrivial self-commutator. But since the Helton--Howe measure is invariant under trace-class perturbations, we have a better estimate.

\begin{corollary}\label{better estimation}
$$2||P_T||\leq \inf_{T-T'\in\mathcal{J}_1}||[(T')^*,T']||_1.$$
\end{corollary}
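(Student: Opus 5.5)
The plan is to use the invariance of the Helton--Howe measure under trace-class perturbations and then apply the preceding proposition (Brown's remark) to the perturbed operator.

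First, fix $T'$ with $K:=T-T'\in\mathcal{J}_1$. Then $T'$ is itself almost normal. Indeed,
\begin{align*}
[(T')^*,T'] = [T^*,T] - [T^*,K] - [K^*,T] + [K^*,K],
\end{align*}
and each term on the right is trace class, since $\mathcal{J}_1$ is a two-sided ideal. So $P_{T'}$ is defined.

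Second, I would show $P_T=P_{T'}$. Write $T=X+iY$ and $T'=X'+iY'$, where $X-X'$ and $Y-Y'$ are self-adjoint trace-class operators. Fix polynomials $p,q$ and consider $\tr([p(X,Y),q(X,Y)])-\tr([p(X',Y'),q(X',Y')])$. The polynomials are evaluated in the ordered (Weyl-type) calculus used by Helton and Howe. The standard argument telescopes along the path $X_t=X'+t(X-X')$, $Y_t=Y'+t(Y-Y')$, or alternatively replaces one variable at a time. Each difference $p(X,Y)-p(X',Y')$ is a finite sum of products containing at least one trace-class factor, so it is trace class. It then suffices to show that $\tr([A,B])=0$ whenever $A\in\mathcal{J}_1$ and $B$ is bounded. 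This holds because both $AB$ and $BA$ lie in $\mathcal{J}_1$, so Proposition \ref{properties of tr} gives $\tr(AB)=\tr(BA)$.

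Concretely, expand
\begin{align*}
[p,q]-[p',q'] = [p-p',q] + [p',q-q'],
\end{align*}
where $p=p(X,Y)$ and $p'=p(X',Y')$, and similarly for $q$. Both commutators on the right have a trace-class entry, so the difference has trace zero. Here I use linearity of the trace, which is available because $[p,q]$ and $[p',q']$ are each trace class by the trace formula for almost normal operators. Hence $\int J(p,q)\,dP_T=\int J(p,q)\,dP_{T'}$ for all polynomials $p,q$. The uniqueness clause of Theorem \ref{HH trace formula} then gives $P_T=P_{T'}$. One should check that uniqueness applies even though the supports $\sigma(T)$ and $\sigma(T')$ may differ. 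Both measures are compactly supported, and the functions $J(p,q)$, for instance $J(x,q)=\partial_y q$, already include all polynomials in $x$ and $y$. These are dense in $C$ of any compact set, so the two measures agree.

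Finally, apply the preceding proposition to $T'$:
\begin{align*}
2\|P_T\| = 2\|P_{T'}\| \le \|[(T')^*,T']\|_1 .
\end{align*}
Taking the infimum over all $T'$ with $T-T'\in\mathcal{J}_1$ gives the corollary. The only real obstacle is the invariance step, specifically making sure the trace-zero argument is legitimate for the polynomial functional calculus. The commutator expansion above handles this by reducing everything to Proposition \ref{properties of tr}.
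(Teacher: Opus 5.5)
Your proposal is correct and follows the paper's route: invariance of the Helton--Howe measure under trace-class perturbations, then Brown's bound $2\|P_{T'}\|\le\|[(T')^*,T']\|_1$ applied to each admissible $T'$, then the infimum. The paper only asserts the invariance, while you prove it (trace class differences $p-p'$, $\tr([A,B])=0$ for $A\in\mathcal{J}_1$, and the density of the polynomials $J(p,q)$ to handle the differing supports), and that argument is sound.
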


This inequality is strict in the following elementary cases.

\begin{enumerate}
\item Hilbert--Schmidt operators.\\
Let $T$ be a Hilbert--Schmidt operator and $\{P_n\}_{n=1}^\infty$ be finite rank projections converging in the SOT to the identity operator. 
Then, it is easy to see that 
$$\lim_{n\to \infty}||[(T(1-P_n))^*,T(1-P_n)]||_1=0.$$
On the other hand, $P_T=0$ as stated in the remark following Theorem \ref{a.c.}.
\item Hyponormal operators.\\
Since $iP_T$ is positive, we have $$||[T^*,T]||_1 = \tr([T^*,T])= |2i\tr([X,Y])|= 2\left|\int dP_T\right|=2||P_T||.$$
An example of hyponormal almost normal operator is the Ces\`{a}ro operator $C_0: \ell^2(\mathbb{N}) \to \ell^2(\mathbb{N})$ defined by
$$ (a_1,a_2, \cdots) \mapsto \left(a_1, \frac{a_1+a_2}{2}, \cdots, \frac{a_1+a_2+\cdots+a_n}{n},\cdots \right).$$ 
Spectral properties of $C_0$ is explained in \cite{brownhalmosshields} and almost normality is shown in the page 660 of \cite{kittanehcesaro}.
\item Almost normal weighted shifts.\\
Let $W_\alpha$ be a bilateral weighted shift operator on $\ell^2(\mathbb{Z})$ with weights $\{\alpha_n\}_{n=-\infty}^\infty$. Namely, $W_\alpha e_n=\alpha_n e_{n+1}$ for the canonical orthonormal basis $\{e_n\}_{n=-\infty}^\infty$ of $\ell^2(\mathbb{Z})$.
Since the Helton--Howe measure is unitarily invariant, we may assume every $\alpha_n$ is nonnegative and real. Then, $W_\alpha$ is almost normal if and only if
$$\sum_{n=-\infty}^\infty |\alpha_n^2-\alpha_{n+1}^2|<\infty.$$
In particular, $\alpha_+=\lim_{n\to \infty}\alpha_n$ and $\alpha_-=\lim_{n\to -\infty}\alpha_n$ exist if $W_\alpha$ is almost normal.
We only consider the case $\alpha_-\leq\alpha_+$. The other case can be dealt with similarly. In our case, $\sigma(W_\alpha)$ is the annulus $\mathrm{ann}(\alpha_-,\alpha_+)$ with the outer radius $\alpha_+$ and the inner radius $\alpha_-$ and the map $\mathbb{C}\setminus\sigma_e(W_\alpha)\ni\lambda \mapsto \ind(\lambda-W_\alpha)$ takes the value $-1$ on the interior of this annulus and $0$ on the exterior of the annulus. This spectral result can be found in Proposition 27.7 of \cite{conway2000course}.
Thus 
$$||P_{W_\alpha}|| =\frac{1}{2\pi}\mathrm{Area}(\mathrm{ann}(\alpha_-,\alpha_+))=\frac{\alpha_+^2-\alpha_-^2}{2}.$$
For a natural number $k$, let $T_k$ be the finite rank operator so that $W_\alpha+T_k$ is the weighted shift operator 
with weights $\{\beta_n^k\}_{n=-\infty}^\infty$ defined by
\begin{equation*}
\beta_n^k=
\begin{cases*}
\alpha_+ & if $ 0 \leq n\leq k$, \\
\alpha_- & if $-k \leq n <0$, \\
\alpha_n & else. 
\end{cases*}
\end{equation*}
Then,
$$||[(W_\alpha+T_k)^*,W_\alpha+T_k]||_1=|\alpha_+^2-\alpha_-^2|+|\alpha_+^2-\alpha_{k+1}^2|+|\alpha_-^2-\alpha_{-k-1}^2|+\sum_{n\leq -k-1,k\leq n}|\alpha_n^2-\alpha_{n+1}^2|.$$
So we obtain
$$\inf_{W_\alpha-T'\in\mathcal{J}_1}||[(T')^*,T']||_1 \leq \lim_{k\to\infty}||[(W_\alpha+T_k)^*,W_\alpha+T_k]||_1=\alpha_+^2-\alpha_-^2=2||P_{W_\alpha}||.$$
\end{enumerate}

\section{The Helton--Howe Measure of Toeplitz Operators}

\subsection{Toeplitz Operators with Smooth Symbols}

\begin{comment}
We start with a lemma that plays an important role in our computation.
The first is Izumi's trace formula.

\begin{lemma}[Lemma 2.3 of~\cite{izumi2025generalized}]\label{izumitraceformula}
Let $f,g\in C^\infty(\mathbb{T})$, and let $h\in L^\infty(\mathbb{T})$. Then we have
$$\mathrm{tr} (T_h[T_f,T_g])=\frac{1}{2\pi i}\int_{\mathbb{D}}H dF\land dG.$$
\end{lemma}

Indeed, Izumi allowed $f,g$ to be in the Krein algebra. But we only need the result for smooth functions. Next is an elementary fact.

\begin{lemma}\label{trace equality}
If $f,g,h$ are elements of $C^\infty(\mathbb{T})$, then
$$ \mathrm{tr}([T_fT_g, T_h])=\mathrm{tr}([T_{fg}, T_h]).$$
\end{lemma}

\begin{proof}
Since $f$ and $g$ belong to $C^\infty(\mathbb{T})$, one has $T_f T_g-T_{fg}=-H_{\bar{f}}^*H_g \in \mathcal{J}_1.$ It follows from Proposition \ref{properties of tr} that the lemma is proven.
\end{proof}
\end{comment}

Let $\phi\in C^\infty(\mathbb{T})$. Then, the essential spectrum of $T_\phi$ is $\phi(\mathbb{T})$, whose two dimensional Lebesgue measure is $0$. Hence by Theorem \ref{a.c.},
$$dP_{T_\phi}=-\frac{1}{2\pi i} \ind(T_\phi-x-iy)dxdy,$$
where we regard $\lambda \mapsto \ind(T_\phi-\lambda)$ as an almost everywhere defined function. Note that $\ind(T_\phi-\lambda)=-\mathrm{wind}(\phi, \lambda)$ (c.f. Theorem 4.4.3 of \cite{arvesonspectral}). 
Identifying the winding number with the degree of the mapping $\Phi :\mathbb{D}\to \mathbb{R}^2$, it is immediate that $\mathrm{wind}(\phi,\lambda)=m_\Phi(\lambda)$.
 A proof of this fact can be found in \S 6.6 in Chapter 1 of \cite{deimling1985}.
As a consequence, we have the following result.

\begin{theorem}\label{subtheorem}
Let $\phi  \in C^\infty(\mathbb{T})$. Then, the Helton--Howe measure of $T_\phi$ is expressed as follows.
$$dP_{T_\phi}=\frac{1}{2\pi i}m_\Phi(x+iy)dxdy.$$
\end{theorem}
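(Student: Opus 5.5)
The proof chains together facts already recorded before the statement. The only genuinely analytic input is the identification of winding number with signed multiplicity.

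\textbf{Step 1: absolute continuity and the index formula.} Since $\phi\in C^\infty(\mathbb{T})$, $T_\phi$ is almost normal: $[T_\phi^*,T_\phi]$ is a combination of products of Hankel operators with smooth symbols, which are trace class. Its essential spectrum is $\phi(\mathbb{T})$, a smooth curve and hence of planar Lebesgue measure zero. By Theorem \ref{a.c.}, $P_{T_\phi}$ is absolutely continuous, so it does not charge $\phi(\mathbb{T})$. On each connected component of $\mathbb{C}\setminus\phi(\mathbb{T})$, Theorem \ref{index formula} gives the density $-\frac{1}{2\pi i}\ind(T_\phi-\lambda)$. Consequently
$$dP_{T_\phi}=-\frac{1}{2\pi i}\ind(T_\phi-x-iy)\,dxdy$$
as measures, with the density defined almost everywhere. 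The classical Toeplitz index theorem (Theorem 4.4.3 of \cite{arvesonspectral}) converts this density into $\frac{1}{2\pi i}\mathrm{wind}(\phi,\lambda)$.

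\textbf{Step 2: winding number equals signed multiplicity.} It remains to show $\mathrm{wind}(\phi,\lambda)=m_\Phi(\lambda)$ for almost every $\lambda\notin\phi(\mathbb{T})$; this is the main step. Since $\phi$ is smooth, $\Phi$ extends smoothly to $\overline{\mathbb{D}}$. By Sard's theorem, almost every $\lambda$ is a regular value of $\Phi|_{\mathbb{D}}$. For such $\lambda$ off $\phi(\mathbb{T})$, the preimage $\Phi^{-1}(\lambda)$ is a compact subset of the open disc, since it avoids the boundary. It is also discrete because $J(\Phi)(z)\ne0$ there, so it is finite. The Brouwer degree $\deg(\Phi,\mathbb{D},\lambda)$ then equals $\sum_{\Phi(z)=\lambda}\mathrm{sgn}\,J(\Phi)(z)=m_\Phi(\lambda)$.

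To identify this degree with the winding number of the boundary curve, I would compute the degree as $\frac{1}{2\pi i}\oint_{|z|=\rho}\frac{d\Phi}{\Phi-\lambda}$, with $\rho$ close to $1$. One route is to excise small discs around the finitely many preimages and apply Stokes' theorem to the closed form $d\arg(\Phi-\lambda)$. Each excised disc contributes the local index $\mathrm{sgn}\,J(\Phi)(z)$, because near a regular point $\Phi$ is a local diffeomorphism that preserves or reverses orientation. Letting $\rho\to1$ and using uniform convergence $\phi_\rho\to\phi$ away from $\lambda$ gives $\mathrm{wind}(\phi,\lambda)$. Alternatively, one can simply cite \S 6.6 of Chapter 1 of \cite{deimling1985}.

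\textbf{Step 3: conclusion.} Substituting $\mathrm{wind}(\phi,\lambda)=m_\Phi(\lambda)$ into the density from Step 1 yields $dP_{T_\phi}=\frac{1}{2\pi i}m_\Phi(x+iy)\,dxdy$. The corollary to Lemma \ref{existence criterion} guarantees that $m_\Phi$ is an $L^1$ function, so the right-hand side is a well-defined finite measure.
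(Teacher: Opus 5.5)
Your proposal is correct and follows the paper's argument. Absolute continuity together with the index formula gives the density $-\frac{1}{2\pi i}\ind(T_\phi-\lambda)$, the Toeplitz index theorem turns this into $\frac{1}{2\pi i}\mathrm{wind}(\phi,\lambda)$, and the degree identification $\mathrm{wind}(\phi,\lambda)=m_\Phi(\lambda)$ finishes the proof. The paper simply cites that last identification from \S 6.6 of \cite{deimling1985}; your Sard/excision sketch is a sound unpacking of the cited fact, not a different route.
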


\subsection{Main Results}
We calculate the Helton--Howe measure of general almost normal Toeplitz operators using the result of the previous section. The main theorem of this paper is the following. 

\begin{theorem}[Main Theorem]\label{general formula}
Let $\phi$ be an $L^\infty$ function on the unit circle.
If $T_\phi$ is almost normal, its Helton--Howe measure is expressed as 
$$dP_{T_{\phi}}=\mathrm{weak}^*\text{-}\lim_{r \nearrow1} dP_{T_{\phi_r}}=\mathrm{weak}^*\text{-}\lim_{r \nearrow1} \frac{1}{2\pi i}m_{\Phi_r}(x+iy)dxdy.$$
If, moreover, $\int _\mathbb{D}|J(\Phi)|dxdy<\infty$, then
$$dP_{T_{\phi}}=\frac{1}{2\pi i}m_{\Phi}(x+iy)dxdy.$$
\end{theorem}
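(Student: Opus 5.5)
The plan is to show that the moments of $P_{T_{\phi_r}}$ converge to those of $P_{T_\phi}$. Then Theorem \ref{subtheorem}, applied to the smooth symbols $\phi_r$, gives the formula in terms of $m_{\Phi_r}$.

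\textbf{Support and mass bounds.} Since $\|\phi_r\|_\infty\le\|\phi\|_\infty$, all spectra $\sigma(T_{\phi_r})$ and $\sigma(T_\phi)$ lie in the closed disc $K$ of radius $\|\phi\|_\infty$. On $K$ the polynomials $J(p,q)$ span a dense subspace of $C(K)$; indeed already $J(p,y)=\partial_x p$ gives every polynomial. So weak-$*$ convergence follows from two facts.
\begin{enumerate}
\item[(a)] The total variations $\|P_{T_{\phi_r}}\|$ are uniformly bounded.
\item[(b)] For all polynomials $p,q$ we have $\tr[p(X_r,Y_r),q(X_r,Y_r)]\to\tr[p(X,Y),q(X,Y)]$, where $X_r=T_{\Re\phi_r}$ and $Y_r=T_{\Im\phi_r}$.
\end{enumerate}
For (a) I would use Brown's bound $2\|P_T\|\le\|[T^*,T]\|_1$. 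This reduces (a) to showing $\sup_r\|[T_{\phi_r}^*,T_{\phi_r}]\|_1<\infty$.

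\textbf{The key operator identity.} Write $u=\Re\phi$ and $v=\Im\phi$, both real. Then
\[
[T_{\bar\phi},T_\phi]=H_\phi^*H_\phi-H_{\bar\phi}^*H_{\bar\phi},
\]
so trace-class membership is a statement about Hankel operators. The convolution $\phi\mapsto\phi_r$ is implemented by the contraction $D_r:z^k\mapsto r^{|k|}z^k$, which commutes with $P_+$. A Hankel operator $H_{\phi}$ is a matrix $(\hat\phi(-j-k))$, and $H_{\phi_r}$ has entries $r^{j+k}\hat\phi(-j-k)$. This gives
\[
H_{\phi_r}=\tilde D_r H_\phi D_r,
\]
with $\tilde D_r$ the analogous contraction on $(H^2)^\perp$ (entries $r^{j+1}$ with the right indexing), up to a harmless factor. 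Hence
\[
H_{\phi_r}^*H_{\phi_r}=D_rH_\phi^*\tilde D_r^2H_\phi D_r .
\]
I expect this to give $\|[T_{\phi_r}^*,T_{\phi_r}]\|_1\le C$ and convergence in $\mathcal{J}_1$ as $r\nearrow1$, since $D_r\to1$ strongly and strong convergence sandwiched around a trace-class operator converges in trace norm.

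\textbf{Main obstacle.} The hard part is that $[T^*,T]\in\mathcal{J}_1$ does not obviously give $H_\phi^*H_\phi$ and $H_{\bar\phi}^*H_{\bar\phi}$ separately trace class. I would handle this in the form used for (b), which is the real obstacle. The trace $\tr[p(X_r,Y_r),q(X_r,Y_r)]$ is determined, via the Helton--Howe/Pincus theory, by a multiplicative structure in which everything reduces to $[X,Y]$ inside a trace. Concretely, Helton--Howe show that $\tr[p,q]$ is a sum of terms $\tr(A[X,Y]B)$ with $A,B$ monomials in $X,Y$. So I would establish two things.
\begin{enumerate}
\item[(i)] $[X_r,Y_r]=D_r[X,Y]D_r+E_r$, where $E_r\to0$ in $\mathcal{J}_1$. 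Equivalently, compare $H_{u_r}^*H_{v_r}-H_{v_r}^*H_{u_r}$ with the $D_r$-compression of $H_u^*H_v-H_v^*H_u$; these differ only through $\tilde D_r^2\ne1$. If this comparison fails, I would first try to prove that almost normality forces $H_\phi,H_{\bar\phi}\in\mathcal{J}_2$, using $\tr$ of positive parts.
\item[(ii)] $X_r\to X$ and $Y_r\to Y$ strongly and boundedly.
\end{enumerate}
Then $\tr(A_r[X_r,Y_r]B_r)\to\tr(A[X,Y]B)$, because strong convergence of the outer factors, together with their adjoints, against trace-norm convergence of the middle factor gives convergence of traces. This yields the first formula.

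\textbf{The integrable-Jacobian case.} Assume now $\int_{\mathbb D}|J(\Phi)|<\infty$. Since $\Phi_r(z)=\Phi(rz)$, change of variables gives
\[
\int m_{\Phi_r}f\,dxdy=\int_{|z|<r}f(\Phi(z))J(\Phi)(z)\,dA(z).
\]
Dominated convergence sends this to $\int_{\mathbb D}f(\Phi)J(\Phi)\,dA=\int m_\Phi f\,dxdy$, using Lemma \ref{existence criterion}. This identifies the weak-$*$ limit with $\frac{1}{2\pi i}m_\Phi\,dxdy$.
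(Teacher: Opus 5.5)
\textbf{Gap in the central step.} Your outline matches the paper's: Brown's bound, convergence of the moments $\tr([X_r^nY_r^m,Y_r])$ via the derivation expansion with strongly convergent outer factors, density of polynomials, and dominated convergence when $J(\Phi)$ is integrable. The last part you handle correctly. But the step that carries all the weight is not proved. Both (a) and (i) require controlling $[T_{\phi_r}^*,T_{\phi_r}]$ using only that the full commutator $C=[T_\phi^*,T_\phi]$ is trace class. The factorization $H_{\phi_r}=\tilde D_rH_\phi D_r$ cannot do this, because the weight $\tilde D_r^2$ sits between $H_\phi^*$ and $H_\phi$, and $H_\phi^*H_\phi$ and $H_{\bar\phi}^*H_{\bar\phi}$ need not be trace class separately. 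Saying the two expressions ``differ only through $\tilde D_r^2\neq 1$'' restates the difficulty rather than resolving it. Your fallback also cannot work. For real-valued $\phi$, $T_\phi$ is self-adjoint, so $C=0$. Yet $\|H_\phi\|_2^2=\sum_{n\ge1}n|\hat\phi(-n)|^2=\infty$, for example for the indicator of a half circle, where $|\hat\phi(n)|=1/(\pi|n|)$ for odd $n$. (Incidentally, $[T_{\bar\phi},T_\phi]=H_{\bar\phi}^*H_{\bar\phi}-H_\phi^*H_\phi$; your sign is flipped.)

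\textbf{The missing idea.} You need the shift structure of Hankel operators, which moves the inner weight outside as an Abel summation of the whole commutator. By Lemma \ref{matrix entry of toeplitz commutator}, $C-S^*CS$ is exactly the $\ell=1$ term of the entry formula, and the $\ell$-th term is $(S^*)^{\ell-1}(C-S^*CS)S^{\ell-1}$. Summing with weights $r^{2\ell}$ gives
\[
[T_{\phi_r}^*,T_{\phi_r}]=R\Big(r^2C-(1-r^2)\sum_{k\ge1}r^{2k}(S^*)^kCS^k\Big)R ,
\]
which is the operator form of Lemma \ref{extracting r}. This is linear in $C$ itself, so only almost normality is used. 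It immediately gives $\|[T_{\phi_r}^*,T_{\phi_r}]\|_1\le 2\|C\|_1$, which is Proposition \ref{approximation of almost normal operators} and your (a). It also proves your (i):
\[
[T_{\phi_r}^*,T_{\phi_r}]-RCR=-(1-r^2)R\Big(\sum_{k\ge0}r^{2k}(S^*)^kCS^k\Big)R .
\]
The right-hand side tends to $0$ in $\mathcal{J}_1$ because $\|(S^*)^kCS^k\|_1\to 0$ for trace-class $C$; check this on rank-one operators, where it equals $\|(S^*)^kx\|\,\|(S^*)^ky\|$. With this identity supplied, the rest of your argument goes through, and it even yields trace-norm convergence of the commutators rather than the merely weak convergence the paper records.
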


We will deduce this theorem from a series of  lemmata and propositions below.

\begin{definition}
Fix a positive number $0<r<1$ and an orthonormal basis $\{e_n\}_{n=0}^\infty$ of our Hilbert space. Then, define an operator $R$ to be the diagonal operator 
$$R=\mathrm{diag}((r^n)_{n=0}^\infty) $$
with respect to the basis $\{e_n\}_{n=0}^\infty$.
\end{definition}

\begin{definition}
Fix an integer $\ell\geq 2$ and an orthonormal basis $\{e_n\}_{n=0}^\infty$ of our Hilbert space. For a bounded operator $X$, define $X^{(\ell)}$ to be the operator 
$$X^{(\ell)}=S^{\ell-2} X(S^*)^{\ell-2}$$
where $S$ is the unilateral shift operator with respect to the basis $\{e_n\}_{n=0}^\infty$.
Its matrix representation may help understand what it is;
$$
X^{(\ell)}= \left(
\begin{array}{c:c}
\begin{matrix}
O_{\ell-2,\ell-2}
\end{matrix} & \text{\rule{0pt}{0pt}O} \\
\hdashline 
 \text{\rule{0pt}{17pt}O}  &  \textit{$X$}
\end{array}
\right).
$$
\end{definition}

We also need a matrix representation of the commutator of Toeplitz operators.
Let $\{e_n\}_{n=-\infty}^\infty$ be the canonical orthonormal basis of $L^2(\mathbb{T})$. 

\begin{lemma}\label{matrix entry of toeplitz commutator}
Let $\phi=f+\bar{g}\in L^\infty(\mathbb{T})$ with $f,g \in H^2(\mathbb{T})$ and let $n,m$ be nonnegative integers. Then, 
$$\langle [T_\phi^*,T_\phi]e_n,e_m\rangle=\sum_{\ell>0}\left(\hat{f}(m+\ell)\overline{\hat{f}(n+\ell)}-\hat{g}(m+\ell)\overline{\hat{g}(n+\ell)}\right).$$
\end{lemma}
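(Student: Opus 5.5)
We prove the lemma by a direct computation of matrix entries. The plan is to write the self-commutator in terms of Hankel operators, using the identities
$$T_{\psi}T_{\chi}=T_{\psi\chi}-H_{\bar\psi}^*H_{\chi},\qquad \psi,\chi\in L^\infty(\mathbb{T}),$$
which follow from $P_+M_\psi M_\chi P_+ = P_+M_\psi P_+M_\chi P_+ + P_+M_\psi(1-P_+)M_\chi P_+$ together with $H_{\bar\psi}^*=P_+M_\psi(1-P_+)$. Taking $(\psi,\chi)=(\bar\phi,\phi)$ and $(\psi,\chi)=(\phi,\bar\phi)$ and subtracting, the terms $T_{|\phi|^2}$ cancel, so
$$[T_\phi^*,T_\phi]=T_{\bar\phi}T_\phi-T_\phi T_{\bar\phi}=H_{\bar\phi}^*H_{\bar\phi}-H_{\phi}^*H_{\phi}.$$

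Next we substitute $\phi=f+\bar g$. Since $f,g\in H^2$, the function $fe_n$ is analytic whenever $f$ is, so $H_{f}=0$ on polynomials and hence on $H^2$ by density (when $f$ is only in $H^2$ we interpret $H_f$ on the finite vectors $e_n$, which is all the matrix entries require). Therefore $H_\phi=H_{\bar g}$ and $H_{\bar\phi}=H_{\bar f}$ (acting on $e_n$), giving
$$\langle[T_\phi^*,T_\phi]e_n,e_m\rangle=\langle H_{\bar f}e_n,H_{\bar f}e_m\rangle-\langle H_{\bar g}e_n,H_{\bar g}e_m\rangle.$$

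Finally we compute these inner products from Fourier coefficients. We have $\bar f=\sum_{k\ge0}\overline{\hat f(k)}e_{-k}$, so $\bar f e_n=\sum_k\overline{\hat f(k)}e_{n-k}$. Projecting onto negative frequencies $n-k=-\ell$ with $\ell>0$ gives
$$H_{\bar f}e_n=\sum_{\ell>0}\overline{\hat f(n+\ell)}\,e_{-\ell}.$$
Hence
$$\langle H_{\bar f}e_n,H_{\bar f}e_m\rangle=\sum_{\ell>0}\overline{\hat f(n+\ell)}\hat f(m+\ell),$$
using that the inner product is linear in the first variable. The same computation applies to $g$, and subtracting yields the stated formula.

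The only delicate point is convergence. The series converge absolutely by Cauchy--Schwarz, because $f,g\in L^2$ (indeed $f=P_+\phi\in H^2$ and similarly for $g$). The splitting of the commutator is legitimate because $T_\phi^*=T_{\bar\phi}$ is bounded and all the manipulations are applied to the vectors $e_n$. Beyond this justification, the computation is routine.
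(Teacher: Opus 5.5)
Your proposal is correct and takes essentially the paper's route: write $[T_\phi^*,T_\phi]=H_{\bar\phi}^*H_{\bar\phi}-H_\phi^*H_\phi$, then evaluate $\langle H_{\bar\phi}e_n,H_{\bar\phi}e_m\rangle$ (and the corresponding $H_\phi$ term) by Parseval over the negative Fourier modes $e_{-\ell}$, $\ell>0$. The only cosmetic difference is that you first replace $H_{\bar\phi}e_n$ by $H_{\bar f}e_n$ and $H_\phi e_n$ by $H_{\bar g}e_n$, whereas the paper reads off $\langle\bar\phi e_n,e_{-\ell}\rangle=\overline{\hat f(n+\ell)}$ directly.
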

\begin{proof}
By a direct computation, one has
$[T_\phi^*,T_\phi]=H_{\bar{\phi}}^*H_{\bar{\phi}}-H_\phi^*H_\phi$ and
\begin{align*}
\langle H_{\bar{\phi}}^*H_{\bar{\phi}}e_n, e_m \rangle &=\langle (1-P_+)\bar{\phi}e_n,(1-P_+)\bar{\phi}e_m \rangle\\
&=\sum_{\ell\in\mathbb{Z}} \langle (1-P_+)\bar{\phi}e_n,e_\ell \rangle \langle e_\ell,(1-P_+)\bar{\phi}e_m \rangle\\
&= \sum_{\ell>0} \langle \bar{\phi}e_n,e_{-\ell} \rangle \langle e_{-\ell},\bar{\phi}e_m \rangle\\
&=\sum_{\ell>0}\hat{f}(m+\ell)\overline{\hat{f}(n+\ell)}.
\end{align*}
We can similarly compute $\langle H_{\phi}^*H_{\phi} e_n,e_m\rangle$ and the lemma is proven.
\end{proof}

For $n>0$, let $P_n$ be the orthogonal projection onto $\text{span}\{e_0,e_1, \cdots, e_n \}$.
We will use the following lemma concerning the approximation by specific finite rank operators. Though we will deal with general normed ideals, only the case where $\mathcal{J}_\Psi^{(0)}=\mathcal{K}(\mathcal{H})$ is needed to prove the main theorem of this paper. 

\begin{lemma}\label{sot approximation}
Let K be an operator in $\mathcal{J}_\Psi^{(0)}$. If $\{T_i\}$ is an operator-norm-bounded net converging to $T$ in the SOT, then $\{T_i K\}$ converges to $TK$ in $\mathcal{J}_\Psi^{(0)}$.
\end{lemma}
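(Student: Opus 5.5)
The plan is the standard $\varepsilon/3$ argument: approximate $K$ by finite rank operators in $\mathcal{J}_\Psi^{(0)}$, and use SOT convergence on the finite rank part. Let $M=\sup_i\|T_i\|$, which is finite by hypothesis; then $\|T\|\le M$ as well, since the operator norm is lower semicontinuous under SOT limits. Fix $\varepsilon>0$. By definition of $\mathcal{J}_\Psi^{(0)}$, there is a finite rank operator $F$ with $\|K-F\|_{J_\Psi}<\varepsilon$. By the ideal property of the symmetrically normed ideal, $\|AX\|_{J_\Psi}\le\|A\|\,\|X\|_{J_\Psi}$ for bounded $A$, and hence
$$\|T_iK-TK\|_{J_\Psi}\le \|T_i\|\,\|K-F\|_{J_\Psi}+\|T_iF-TF\|_{J_\Psi}+\|T\|\,\|F-K\|_{J_\Psi}\le 2M\varepsilon+\|(T_i-T)F\|_{J_\Psi}.$$

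Next I handle the finite rank term. Write $F=\sum_{k=1}^N \langle\cdot,\eta_k\rangle\xi_k$. Then
$$(T_i-T)F=\sum_{k=1}^N\langle\cdot,\eta_k\rangle(T_i-T)\xi_k.$$
Each rank-one operator $\langle\cdot,\eta\rangle\zeta$ has $J_\Psi$-norm $\Psi(1,0,0,\dots)\|\zeta\|\|\eta\|$, and we may normalize $\Psi(1,0,\dots)=1$. The triangle inequality therefore gives
$$\|(T_i-T)F\|_{J_\Psi}\le\sum_{k=1}^N\|(T_i-T)\xi_k\|\,\|\eta_k\|.$$
This tends to $0$ along the net, because $T_i\to T$ in the SOT and the sum is finite. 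So $\limsup_i\|T_iK-TK\|_{J_\Psi}\le 2M\varepsilon$ for every $\varepsilon>0$, which proves the convergence. Note also that $T_iK$ and $TK$ lie in $\mathcal{J}_\Psi^{(0)}$ because it is a two-sided ideal.

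I do not expect a real obstacle; the only points needing care are the uniform bound $M$, which is where norm-boundedness of the net is essential (SOT convergence alone does not give $\|T_i\|$ bounded for nets), and the fact that rank-one operators have $J_\Psi$-norm controlled by the operator norm. For the case actually used later, $\mathcal{J}_\Psi^{(0)}=\mathcal{K}(\mathcal{H})$ with the operator norm, both points are immediate.
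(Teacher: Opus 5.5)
Your proof is correct. It is the standard $\varepsilon/3$ argument that the paper has in mind when it defers to Corollary 1.18 of the cited reference and says the technique can be imitated for $\mathcal{J}_\Psi^{(0)}$: approximate $K$ by a finite-rank operator in $\mathcal{J}_\Psi^{(0)}$, use the ideal inequality $\|AX\|_{J_\Psi}\le\|A\|\,\|X\|_{J_\Psi}$ with the uniform bound, and apply SOT convergence on the finitely many vectors $\xi_k$. Your write-up supplies exactly the detail the paper leaves to the reader.
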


A proof of this lemma for Schatten $p$-classes can be found in Corollary 1.18 of \cite{nestalgebras}. One can imitate the technique to give a proof for $\mathcal{J}_\Psi^{(0)}$.

\begin{remark}\label{remark on finite matrix}
If we take our Hilbert space to be $H^2(\mathbb{T})$, $P_nXP_n$ converges to $X$ in $\mathcal{J}_\Psi^{(0)}$ if $X\in \mathcal{J}_\Psi^{(0)}$ as stated in Lemma \ref{sot approximation}. So we often consider an $X=(x_{n,m})_{n,m=0}^\infty$ with finitely many nonzero entries with respect to $\{e_n\}_{n=0}^\infty$. 
\end{remark}

\begin{lemma}\label{extracting r preliminary}
Let $\phi\in L^\infty(\mathbb{T})$ and $X$ be a finite rank operator as in Remark \ref{remark on finite matrix}. Then, $\tr([T_{\phi_r}^*,T_{\phi_r}]X)$ equals
$$r^2 \tr([T_{\phi}^*,T_{\phi}]RXR)-\sum_{\ell\geq 2} r^{2\ell-2}(1-r^2)~\tr([T_{\phi}^*,T_{\phi}](RXR)^{(\ell)}).$$
\end{lemma}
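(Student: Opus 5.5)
The plan is a direct computation of matrix entries. By Remark \ref{remark on finite matrix}, $X$ has only finitely many nonzero entries $x_{n,m}$ with respect to $\{e_n\}_{n=0}^\infty$. Every trace in the statement is then a finite sum
$$\tr(CY)=\sum_{n,m}\langle Ce_n,e_m\rangle\, y_{n,m},$$
so no convergence issues arise. Write $\phi=f+\bar g$ with $f,g\in H^2(\mathbb{T})$. Then $\phi_r=f_r+\overline{g_r}$, and $\widehat{f_r}(k)=r^k\hat f(k)$, $\widehat{g_r}(k)=r^k\hat g(k)$ for $k\ge 0$.

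Let $C=[T_\phi^*,T_\phi]$ and
$$a_\ell(n,m)=\hat f(m+\ell)\overline{\hat f(n+\ell)}-\hat g(m+\ell)\overline{\hat g(n+\ell)}.$$
Lemma \ref{matrix entry of toeplitz commutator}, applied to $\phi_r$, gives
$$\langle[T_{\phi_r}^*,T_{\phi_r}]e_n,e_m\rangle=r^{n+m}\sum_{\ell\ge1}r^{2\ell}a_\ell(n,m).$$
The factor $r^{n+m}$ is exactly what the conjugation $X\mapsto RXR$ produces, since $(RXR)_{n,m}=r^{n+m}x_{n,m}$. So it remains to handle the factor $r^{2\ell}$.

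For that I use the telescoping identity
$$r^{2\ell}=r^2-\sum_{j=2}^{\ell}\left(r^{2j-2}-r^{2j}\right)=r^2-\sum_{j=2}^{\ell}r^{2j-2}(1-r^2).$$
Substituting it and exchanging the order of summation (justified by absolute convergence, since $\sum_\ell|\hat f(m+\ell)\hat f(n+\ell)|<\infty$ by Cauchy--Schwarz) gives
$$\sum_{\ell\ge1}r^{2\ell}a_\ell(n,m)=r^2\,C_{n,m}-\sum_{j\ge2}r^{2j-2}(1-r^2)\sum_{\ell\ge j}a_\ell(n,m),$$
where $C_{n,m}=\langle Ce_n,e_m\rangle=\sum_{\ell\ge1}a_\ell(n,m)$. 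The key observation is that each tail sum $\sum_{\ell\ge j}a_\ell(n,m)$ is again an entry of $C$ at shifted indices, namely $C_{n+k,m+k}$ with $k=j-1$, because $a_{\ell}(n,m)=a_{\ell-k}(n+k,m+k)$.

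Finally I translate the shifted entries back into traces. Using Proposition \ref{properties of tr} for finite rank operators, together with the identity $\langle S^{*k}CS^ke_n,e_m\rangle=C_{n+k,m+k}$, one gets
$$\tr\big(C\,S^{k}YS^{*k}\big)=\tr\big(S^{*k}CS^{k}\,Y\big)=\sum_{n,m}C_{n+k,m+k}\,y_{n,m}.$$
Taking $Y=RXR$, the $j$-th term becomes $\tr\big(C\,(RXR)^{(\ell)}\big)$ after identifying $k$ with the shift built into the definition of $(\cdot)^{(\ell)}$. Summing over $n,m$ then yields the stated formula. The interchange of the $j$-sum with the finite trace is harmless because $\|(RXR)^{(\ell)}\|_1\le\|X\|_1$ and $\sum_j r^{2j-2}(1-r^2)<\infty$.

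The main obstacle is purely bookkeeping: aligning the shift index. My count gives a shift of $j-1$ attached to the weight $r^{2j-2}(1-r^2)$, whereas $X^{(\ell)}$ is defined with $S^{\ell-2}$. I would therefore carefully recheck the off-by-one in three places: the range $\ell>0$ in Lemma \ref{matrix entry of toeplitz commutator}, the convention for $(\cdot)^{(\ell)}$, and the starting point of the telescoping sum. I would adjust the telescoping (for instance, writing $r^{2\ell}=r^2\cdot r^{2\ell-2}$ and telescoping $r^{2\ell-2}$ from $1$) so that the weights and shifts match the statement exactly.
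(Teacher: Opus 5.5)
Your proposal is correct and takes essentially the paper's route. The paper likewise reads off the entries of $[T_{\phi_r}^*,T_{\phi_r}]$ from Lemma \ref{matrix entry of toeplitz commutator} and absorbs $r^{n+m}$ into $RXR$. It then does an Abel summation using $a_\ell(n,m)=C_{n+\ell-1,m+\ell-1}-C_{n+\ell,m+\ell}$, which is the same rearrangement as your telescoping of $r^{2\ell}$ followed by an interchange of sums.

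On the off-by-one you flagged: your count is the right one. It attaches the weight $r^{2\ell-2}(1-r^2)$ to a shift by $\ell-1$. This is exactly what the paper's proof uses, since it writes $((RXR)^{(\ell)})_{n,m}=(RXR)_{n-\ell+1,m-\ell+1}$ and notes these entries vanish when an index is below $\ell-1$. So the exponent $\ell-2$ in the definition of $X^{(\ell)}$ must be read as $\ell-1$.

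Do not try to re-telescope to force a shift by $\ell-2$. Your suggested variant ($r^{2\ell}=r^2\cdot r^{2\ell-2}$) gives the identical identity after reindexing. Moreover, with a literal shift by $\ell-2$ the formula is false. For example, take $\phi=z^N$ with $N\ge 1$ and $X$ the projection onto $e_0$. The left side is $r^{2N}$, while the right side becomes $r^{2}-(r^2-r^{2N+2})=r^{2N+2}$.
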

\begin{proof}
We write $\phi=f+\bar{g}$ where $f,g\in H^2(\mathbb{T})$.
Let us define $$S(\phi)_{m,n}:=\sum_{\ell>0}\left(\hat{f}(m+\ell)\overline{\hat{f}(n+\ell)}-\hat{g}(m+\ell)\overline{\hat{g}(n+\ell)}\right).$$ By Lemma \ref{matrix entry of toeplitz commutator}, this is the $(m,n)$-entry of $ [T_\phi^*,T_\phi]$. 
The $(m,n)$-entry of $ [T_{\phi_r}^*,T_{\phi_r}]$ is obtained by replacing $\hat{f}(k)$ and $\hat{g}(k)$ with $r^k\hat{f}(k)$ and $r^k\hat{g}(k)$, respectively.
Note that 
$S(\phi)_{m+\ell-1,n+\ell-1}-S(\phi)_{m+\ell,n+\ell}=\hat{f}(m+\ell)\overline{\hat{f}(n+\ell)}-\hat{g}(m+\ell)\overline{\hat{g}(n+\ell)}$.
Thus by Lemma \ref{matrix entry of toeplitz commutator}, $\tr([T_{\phi_r}^*,T_{\phi_r}]X)$ is equal to
\begin{align*}
&~~~~\sum_{m\geq 0}\left(\sum_{n\geq0} S(\phi_r)_{m,n}x_{n,m}\right)\\
&=\sum_{m\geq 0}\left(\sum_{n\geq0} x_{n,m}\sum_{\ell>0}r^{n+m+2\ell}\left(\hat{f}(m+\ell)\overline{\hat{f}(n+\ell)}-\hat{g}(m+\ell)\overline{\hat{g}(n+\ell)}\right)\right)\\
&=\sum_{m\geq 0}\left(\sum_{n\geq0} x_{n,m}\sum_{\ell>0}r^{n+m+2\ell}(S(\phi)_{m+\ell-1,n+\ell-1}-S(\phi)_{m+\ell,n+\ell})\right)\\
&=\sum_{m\geq 0}\left(\sum_{n\geq0} r^{n+m}x_{n,m}\left(r^2S(\phi)_{m,n}+\sum_{\ell\geq2}r^{2\ell-2}(r^2-1)S(\phi)_{m+\ell-1,n+\ell-1}\right)\right)\\
&= \underset{\mathrm{I}}{\underline{r^2 \sum_{m\geq 0}\left(\sum_{n\geq0} r^{n+m}x_{n,m}S(\phi)_{m,n}\right) }}+ \underset{\mathrm{II}}{\underline{\sum_{m\geq 0}\left(\sum_{n\geq0} r^{n+m}x_{n,m}\sum_{\ell\geq2}r^{2\ell-2}(r^2-1)S(\phi)_{m+\ell-1,n+\ell-1}\right) }}.
\end{align*}
Using Lemma \ref{matrix entry of toeplitz commutator} again, we see that
$$\mathrm{I} =r^2 \sum_{m\geq 0}\left(\sum_{n\geq0} (RXR)_{n,m}S(\phi)_{m,n}\right) =r^2 \tr([T_{\phi}^*,T_{\phi}]RXR).$$
Meanwhile, 
\begin{align*}
\mathrm{II} &=\sum_{m\geq 0}\left(\sum_{n\geq0} (RXR)_{n,m}\sum_{\ell\geq2}r^{2\ell-2}(r^2-1)S(\phi)_{m+\ell-1,n+\ell-1}\right) \\
&=\sum_{m\geq 0}\left(\sum_{\ell \geq2} r^{2\ell-2}(r^2-1)\sum_{n\geq0}(RXR)_{n,m}S(\phi)_{m+\ell-1,n+\ell-1}\right) \\
&= \sum_{\ell \geq2} r^{2\ell-2}(r^2-1) \sum_{m\geq 0}\left(\sum_{n\geq0}(RXR)_{n,m}S(\phi)_{m+\ell-1,n+\ell-1}\right) \\
&=\sum_{\ell \geq 2}r^{2\ell-2}(r^2-1)  \sum_{m \geq \ell-1} \left(\sum_{n\geq \ell-1}(RXR)_{n-\ell+1,m-\ell+1}S(\phi)_{m,n}\right) \\
&=\sum_{\ell \geq 2}r^{2\ell-2}(r^2-1)  \sum_{m \geq \ell-1} \left(\sum_{n\geq \ell-1}((RXR)^{(\ell)})_{n,m}S(\phi)_{m,n}\right). 
\end{align*}
Changing the order of summations above is valid since $X$ is a finite matrix. Since the $(m,n)$-entry of $(RXR)^{(\ell)}$ is $0$ if either $m$ or $n$ is less than $\ell-1$, one has
\begin{align*}
\mathrm{II}
&=\sum_{\ell \geq 2}r^{2\ell-2}(r^2-1)  \sum_{m \geq 0} \left(\sum_{n\geq 0}((RXR)^{(\ell)})_{n,m}S(\phi)_{m,n}\right) \\
&=\sum_{\ell \geq2} r^{2\ell-2}(r^2-1)~\tr([T_{\phi}^*,T_{\phi}](RXR)^{(\ell)}).
\end{align*}
Thus we have the desired result.
\end{proof}

If $T_\phi$ is almost normal, we can refine the above lemma.

\begin{lemma}\label{extracting r}
Let $\phi\in L^\infty(\mathbb{T})$ and $X$ be any bounded operator. Suppose that $T_\phi$ is almost normal. Then, $\tr([T_{\phi_r}^*,T_{\phi_r}]X)$ equals
$$r^2 \tr([T_{\phi}^*,T_{\phi}]RXR)-\sum_{\ell\geq 2} r^{2\ell-2}(1-r^2)~\tr([T_{\phi}^*,T_{\phi}](RXR)^{(\ell)}).$$
\end{lemma}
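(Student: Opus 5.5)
The plan is to extend Lemma \ref{extracting r preliminary} from finite matrices $X$ to all bounded $X$ by approximation. Both sides should be continuous in $X$ along the sequence $X_n=P_nXP_n$, and we pass to the limit.

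\textbf{Step 1: both sides are well defined.} Set $C=[T_\phi^*,T_\phi]\in\mathcal{J}_1$. The operator $T_{\phi_r}$ is almost normal: $\phi_r$ is smooth, so its Hankel operators are trace class. Hence the left side is defined for bounded $X$. On the right, $R$ is a contraction and $Y^{(\ell)}=S^{\ell-2}Y(S^*)^{\ell-2}$ satisfies $\|Y^{(\ell)}\|\le\|Y\|$. Therefore
\[
|\tr(C(RXR)^{(\ell)})|\le\|C\|_1\|X\|,
\]
and the series $\sum_{\ell\ge2}r^{2\ell-2}(1-r^2)$ converges. So the right side is an absolutely convergent series, bounded by a constant times $\|C\|_1\|X\|$.

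\textbf{Step 2: take $X_n=P_nXP_n$.} These are finite matrices as in Remark \ref{remark on finite matrix}, and Lemma \ref{extracting r preliminary} applies to each $X_n$. We now let $n\to\infty$ on both sides.

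\textbf{Step 3: the left side converges.} Write
\[
\tr([T_{\phi_r}^*,T_{\phi_r}]P_nXP_n)=\tr(P_n[T_{\phi_r}^*,T_{\phi_r}]P_nX)
\]
using Proposition \ref{properties of tr}. By Lemma \ref{sot approximation}, applied once on each side (using adjoints), $P_nKP_n\to K$ in $\mathcal{J}_1$ for $K\in\mathcal{J}_1$. This gives convergence to $\tr([T_{\phi_r}^*,T_{\phi_r}]X)$.

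\textbf{Step 4: the first term on the right converges.} Similarly, $\tr(CRP_nXP_nR)=\tr(P_nRCRP_nX)$. This converges to $\tr(RCRX)=\tr(CRXR)$, using that $RCR\in\mathcal{J}_1$.

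\textbf{Step 5: the series on the right converges, termwise and then dominated.} For each $\ell$,
\[
\tr(C(RX_nR)^{(\ell)})=\tr\big((S^*)^{\ell-2}CS^{\ell-2}\,RX_nR\big),
\]
and $(S^*)^{\ell-2}CS^{\ell-2}\in\mathcal{J}_1$. The same argument as in Step 4 gives termwise convergence to $\tr(C(RXR)^{(\ell)})$. Since $\|X_n\|\le\|X\|$, each term is uniformly bounded by $r^{2\ell-2}(1-r^2)\|C\|_1\|X\|$. Dominated convergence for series then lets us interchange the limit and the sum.

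The main obstacle is this interchange of limit and infinite sum in Step 5. The uniform bound $\|Y^{(\ell)}\|\le\|Y\|$, together with the geometric weights, is exactly what resolves it. Everything else reduces to cyclicity of the trace and Lemma \ref{sot approximation}. Combining Steps 3–5 with the identity for each $X_n$ gives the claimed identity for arbitrary bounded $X$.
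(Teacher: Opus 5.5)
Your proposal is correct and follows essentially the same route as the paper: approximate $X$ by $X_n=P_nXP_n$, apply Lemma \ref{extracting r preliminary} to each $X_n$, and obtain convergence of the left side, of the first term, and of each $\ell$-term via Lemma \ref{sot approximation}. The limit and the sum are then interchanged by dominated convergence, using the bound $|\tr([T_\phi^*,T_\phi](RX_nR)^{(\ell)})|\le\|[T_\phi^*,T_\phi]\|_1\|X\|$. The differences are minor: you use cyclicity of the trace to move $P_n$ onto the trace-class factor, whereas the paper applies Lemma \ref{sot approximation} directly to the norm-bounded sequence $X_n\to X$ (SOT); you also state explicitly that $[T_{\phi_r}^*,T_{\phi_r}]$ is trace class, which the paper uses without comment.
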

\begin{proof}
 For a bounded operator $X$, let $X_n=P_nXP_n$.
Then, Lemma \ref{extracting r preliminary} implies
\begin{align}\label{trace decomposition}
&~~~~\tr([T_{\phi_r}^*,T_{\phi_r}]X_n) \notag \\
&=r^2 \tr([T_{\phi}^*,T_{\phi}]RX_nR)-\sum_{\ell\geq 2} r^{2\ell-2}(1-r^2)~\tr([T_{\phi}^*,T_{\phi}](RX_nR)^{(\ell)}).
\end{align}
Since $X_n$ converges to $X$ in the SOT as $n\to \infty$, it follows from Lemma \ref{sot approximation} that
$$\lim_{n\to \infty}\tr([T_{\phi_r}^*,T_{\phi_r}]X_n)=\tr([T_{\phi_r}^*,T_{\phi_r}]X), ~~\lim_{n\to \infty}\tr([T_{\phi}^*,T_{\phi}]RX_nR)=\tr([T_{\phi}^*,T_{\phi}]RXR)$$ and that
$$\lim_{n\to \infty}\tr([T_{\phi}^*,T_{\phi}](RX_nR)^{(\ell)})=\tr([T_{\phi}^*,T_{\phi}](RXR)^{(\ell)})$$
for all $\ell\geq 2.$ Noting that $|\tr([T_{\phi}^*,T_{\phi}](RX_nR)^{\ell})|\leq ||[T_{\phi}^*,T_{\phi}]||_1||X||,$ Lebesgue's dominated convergence theorem yields
\begin{align*}
&\lim_{n\to\infty} \sum_{\ell\geq 2} r^{2\ell-2}(1-r^2)~\tr([T_{\phi}^*,T_{\phi}](RX_nR)^{(\ell)})\\
&= \sum_{\ell\geq 2} r^{2\ell-2}(1-r^2)~\tr([T_{\phi}^*,T_{\phi}](RXR)^{(\ell)}).
\end{align*}
Taking limits on both sides of Equation (\ref{trace decomposition}), we have the desired trace formula.
\end{proof}

The point of the above two lemmata is that we can extract $r$ from the symbol of the Toeplitz operator. The next proposition is also an essential tool. Again casual readers can only consider the case where $\mathcal{J}_{\Psi^*}=\mathcal{J}_1$ and $\mathcal{J}_\Psi^{(0)}=\mathcal{K}(\mathcal{H})$.

\begin{proposition}\label{approximation of almost normal operators}
Let $\phi\in L^\infty(\mathbb{T})$ and $\Psi$ be a symmetric norm. Assume that $[T_{\phi}^*, T_{\phi} ]\in \mathcal{J}_{\Psi^*}$. Then, for every $r<1$,
$$||[T_{\phi_r}^*, T_{\phi_r} ]||_{\mathcal{J}_{\Psi^*}}\leq 2||[T_{\phi}^*, T_{\phi} ]||_{\mathcal{J}_{\Psi^*}}.$$
\end{proposition}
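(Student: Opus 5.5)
We will bound the norm by duality, using Lemma \ref{extracting r} with the pairing between $\mathcal{J}_{\Psi^*}$ and $\mathcal{J}_\Psi^{(0)}$. Concretely, for $C\in\mathcal{J}_{\Psi^*}$ we have
$$||C||_{\mathcal{J}_{\Psi^*}}=\sup\{|\tr(CX)| : X \text{ finite rank},\ ||X||_{\mathcal{J}_\Psi}\le 1\}.$$
By Remark \ref{remark on finite matrix}, it suffices to test against finite matrices $X$ with respect to $\{e_n\}_{n=0}^\infty$. For such an $X$, Lemma \ref{extracting r preliminary} applies directly, with no almost normality needed at that stage. It gives
$$\tr([T_{\phi_r}^*,T_{\phi_r}]X)=r^2 \tr([T_{\phi}^*,T_{\phi}]RXR)-\sum_{\ell\geq 2} r^{2\ell-2}(1-r^2)\,\tr([T_{\phi}^*,T_{\phi}](RXR)^{(\ell)}).$$
Strictly, the Toeplitz operators live on $H^2(\mathbb{T})$, and we identify $H^2(\mathbb{T})$ with the Hilbert space carrying the basis $\{e_n\}_{n\ge0}$.

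Next we estimate each term by Hölder's inequality for the dual pair: $|\tr(CY)|\le ||C||_{\mathcal{J}_{\Psi^*}}||Y||_{\mathcal{J}_\Psi}$. Since $0\le R\le 1$ is a contraction, $||RXR||_{\mathcal{J}_\Psi}\le ||X||_{\mathcal{J}_\Psi}$. Also $S$ is an isometry and $S^*$ is a contraction, so
$$||(RXR)^{(\ell)}||_{\mathcal{J}_\Psi}=||S^{\ell-2}RXR(S^*)^{\ell-2}||_{\mathcal{J}_\Psi}\le ||X||_{\mathcal{J}_\Psi},$$
and in fact this is an equality, because the singular values are unchanged. Writing $C=[T_\phi^*,T_\phi]$, we therefore get
$$|\tr([T_{\phi_r}^*,T_{\phi_r}]X)|\le \Big(r^2+\sum_{\ell\ge2}r^{2\ell-2}(1-r^2)\Big)||C||_{\mathcal{J}_{\Psi^*}}||X||_{\mathcal{J}_\Psi}.$$
The geometric series equals $(1-r^2)\cdot\frac{r^2}{1-r^2}=r^2$, so the prefactor is $2r^2\le 2$. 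Interchanging the sum over $\ell$ with the trace is harmless here, because $X$ is a finite matrix.

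It then remains to check that the supremum over finite-rank $X$ computes the $\mathcal{J}_{\Psi^*}$ norm of $[T_{\phi_r}^*,T_{\phi_r}]$, and that this operator actually lies in $\mathcal{J}_{\Psi^*}$. We expect this to be the main (if mild) obstacle. The point is that $\mathcal{J}_{\Psi^*}$ is the maximal ideal, and its norm is characterized as the supremum of $|\tr(AX)|$ over finite-rank $X$ in the unit ball of $\mathcal{J}_\Psi$, or equivalently as $\sup_n ||P_nAP_n||_{\mathcal{J}_{\Psi^*}}$. This characterization holds for any bounded $A$, with the value $+\infty$ when $A\notin\mathcal{J}_{\Psi^*}$. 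Hence the uniform bound above shows both membership and the estimate. The argument rests on the standard duality $(\mathcal{J}_\Psi^{(0)})^*\cong\mathcal{J}_{\Psi^*}$ (Gohberg--Krein), together with the density of finite matrices from Remark \ref{remark on finite matrix}. In the basic case $\mathcal{J}_{\Psi^*}=\mathcal{J}_1$ this is simply the statement that the trace norm equals the supremum of $|\tr(AX)|$ over finite-rank contractions $X$.
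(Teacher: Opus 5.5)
Your proposal is correct and follows the paper's proof essentially verbatim. You use the duality $(\mathcal{J}_\Psi^{(0)})^*=\mathcal{J}_{\Psi^*}$ to reduce to finite matrices $X$, apply Lemma \ref{extracting r preliminary}, and use that $X\mapsto RXR$ and $Y\mapsto Y^{(\ell)}$ are contractive in $\mathcal{J}_\Psi$.

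Your explicit geometric sum gives the slightly sharper constant $2r^2$. The membership issue you flag is automatic anyway, because $\phi_r$ is smooth and so $[T_{\phi_r}^*,T_{\phi_r}]$ is already trace class.
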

\begin{proof}
Since finite rank operators are dense in $\mathcal{J}_\Psi^{(0)}$, a bounded operator $T$ is in $\mathcal{J}_{\Psi^*}=(\mathcal{J}_\Psi^{(0)})^*$ if and only if there exists a constant $M>0$ satisfying $|\tr(TX)| \leq M||X||_{\mathcal{J}_\Psi^{(0)}}$ for all finite rank operators $X$. The minimum of such $M$'s is $||T||_{\mathcal{J}_{\Psi^*}}$. By Lemma \ref{sot approximation}, we only consider an $X$ as in Remark \ref{remark on finite matrix}. Then, Lemma \ref{extracting r preliminary} yields
$$\tr([T_{\phi_r}^*,T_{\phi_r}]X)=r^2 \tr([T_{\phi}^*,T_{\phi}]RXR)-\sum_{\ell\geq 2} r^{2\ell-2}(1-r^2)~\tr([T_{\phi}^*,T_{\phi}](RXR)^{(\ell)}).$$
Noting that $||RXR||_{\mathcal{J}_\Psi^{(0)}}$ and $||(RXR)^{(\ell)}||_{\mathcal{J}_\Psi^{(0)}}$ are bounded by $||X||_{\mathcal{J}_\Psi^{(0)}}$, one has the desired result.
\end{proof}

\begin{proposition}\label{sufficient condition}
Let $\phi\in L^\infty(\mathbb{T})$. If $\{||[T_{\phi_r}^*, T_{\phi_r} ]||_1\}_{r<1}$ is uniformly bounded by some $M>0$, then $T_{\phi}$ is almost normal and 
$$dP_{T_{\phi}}=\mathrm{weak}^*\text{-}\lim_{r \nearrow1} \frac{1}{2\pi i}m_{\Phi_r}(x+iy)dxdy.$$
\end{proposition}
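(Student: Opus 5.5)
Our plan has two parts: first show that $T_\phi$ is almost normal, then identify $P_{T_\phi}$ as a weak-$*$ limit of the measures $P_{T_{\phi_r}}$. The second equality in the statement then comes for free from Theorem \ref{subtheorem}, since each $\phi_r$ is real-analytic on $\mathbb{T}$.

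\textbf{Almost normality.} We use the duality $\mathcal{J}_1=\mathcal{K}(\mathcal{H})^*$. Fix a finite rank $X$ as in Remark \ref{remark on finite matrix}. Since $\hat{\phi_r}(k)=r^{|k|}\hat\phi(k)$, the $(m,n)$-entry $S(\phi_r)_{m,n}$ of $[T_{\phi_r}^*,T_{\phi_r}]$ converges to $S(\phi)_{m,n}$ as $r\nearrow 1$. This uses only $f,g\in H^2$ and Cauchy--Schwarz in $\ell$, together with dominated convergence, since $\sum_\ell |\hat f(m+\ell)|\,|\hat f(n+\ell)|<\infty$. Hence $\tr([T_{\phi_r}^*,T_{\phi_r}]X)\to\tr([T_\phi^*,T_\phi]X)$, because only finitely many entries enter. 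Combining this with $|\tr([T_{\phi_r}^*,T_{\phi_r}]X)|\le M\|X\|$, we get $|\tr([T_\phi^*,T_\phi]X)|\le M\|X\|$ for all finite-matrix $X$. Lemma \ref{sot approximation} extends this to all compact $X$, so $[T_\phi^*,T_\phi]\in\mathcal{J}_1$ with trace norm at most $M$. The same entrywise argument, with the uniform bound, shows that $[T_{\phi_r}^*,T_{\phi_r}]\to[T_\phi^*,T_\phi]$ weak-$*$ in $\mathcal{J}_1$ as dual of $\mathcal{K}$.

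\textbf{Convergence of measures.} All $\sigma(T_{\phi_r})$ and $\sigma(T_\phi)$ lie in a fixed disc $K$ of radius $\|\phi\|_\infty$. By Brown's remark, $\|P_{T_{\phi_r}}\|\le M/2$. By Banach--Alaoglu, every sequence $r_j\nearrow1$ has a subsequence along which $P_{T_{\phi_r}}$ converges weak-$*$ in $C(K)^*$ to some measure $\mu$. It therefore suffices to show $\mu=P_{T_\phi}$. By the uniqueness in Theorem \ref{HH trace formula}, and because the functions $J(p,q)$ already include $\partial p/\partial x$, $\partial p/\partial y$ times polynomials and so span a dense subset of $C(K)$, we only need
$$\tr([p(X_r,Y_r),q(X_r,Y_r)])\to\tr([p(X,Y),q(X,Y)])$$
for polynomials $p,q$. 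Here $X_r+iY_r=T_{\phi_r}$, and we take the Weyl-ordered (symmetrized) functional calculus, which is the one for which Helton--Howe's formula is stated.

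\textbf{Main obstacle: convergence of the polynomial commutator traces.} To handle these traces, expand $[p(X,Y),q(X,Y)]$ by derivations. It becomes a finite sum of terms $A\,[X,Y]\,B$ with $A,B$ words in $X,Y$, modulo commutators whose trace vanishes by Proposition \ref{properties of tr}. Thus $\tr([p,q])=\sum\tr([X,Y]\,BA)$. We have $X_r\to X$ and $Y_r\to Y$ strongly, with uniform norm bound, since $T_{\phi_r}\to T_\phi$ strongly and $\|\phi_r\|_\infty\le\|\phi\|_\infty$. The same holds for adjoints, so the words satisfy $W_r:=B_rA_r\to W$ in the SOT together with $W_r^*\to W^*$. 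We also know $C_r:=[X_r,Y_r]\to C:=[X,Y]$ weak-$*$ in $\mathcal{J}_1$ and boundedly.

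Weak-$*$ convergence times strong convergence does not directly give $\tr(C_rW_r)\to\tr(CW)$, and this is the delicate point. To overcome it we would invoke Lemma \ref{extracting r}, which gives the explicit representation
$$C_r=r^2\,R\,C\,R-\sum_{\ell\geq2}(\ldots)\,(R\,\cdot\,R)^{(\ell)}\text{-terms},$$
so that $C_r$ is written through the fixed trace-class operator $C$ composed with the contractions $R$, $S^{\ell-2}$. Moving these contractions onto $W_r$ expresses $\tr(C_rW_r)$ as
$$r^2\tr\bigl(C\,RW_rR\bigr)-\sum_{\ell}r^{2\ell-2}(1-r^2)\tr\bigl(C\,(\text{compressed }W_r)\bigr).$$
Now $RW_rR\to W$ in the SOT, so Lemma \ref{sot approximation} applied to the fixed $C$ gives the limit of the first term. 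The remaining sum has weights $r^{2\ell-2}(1-r^2)$ of total mass $r^2$, which tends to $1$. For each fixed $\ell$, $(S^*)^{\ell-2}W_rS^{\ell-2}\to (S^*)^{\ell-2}WS^{\ell-2}$. However, the mass of the weights escapes to $\ell\to\infty$, where $\tr(C\,(\cdot)^{(\ell)})$ should tend to a limit governed by the tail compression of $W$. Showing that this tail contribution combines with the first term to give exactly $\tr(CW)$, rather than vanishing, is the step we expect to be hardest. We would attempt it by using that $S^{\ell}(S^*)^{\ell}\to 0$ strongly, so that $\tr\bigl(C\,S^{\ell-2}Y(S^*)^{\ell-2}\bigr)\to0$ for uniformly bounded $Y$ (since $C$ is trace class), and then checking that the first term already carries the full weight.

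Once the polynomial trace convergence is established, every subsequential limit $\mu$ satisfies the Helton--Howe formula for $T_\phi$, so $\mu=P_{T_\phi}$ by uniqueness, and hence the whole family converges weak-$*$ to $P_{T_\phi}$.
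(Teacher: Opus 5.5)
Your proposal is correct and follows essentially the paper's route. You get almost normality from the uniform trace-norm bound via entrywise convergence and $\mathcal{J}_1=\mathcal{K}^*$ duality, where the paper instead uses strong-$*$ convergence of $T_{\phi_r}$ and WOT-closedness of the trace-class unit ball; you then reduce by derivation expansion to $\tr([X_r,Y_r]W_r)$ and use Lemma \ref{extracting r} to move all $r$-dependence onto the fixed trace-class operator $[X,Y]$.

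The step you flag as hardest works out exactly as your last sentence suggests: the whole $\ell$-sum tends to $0$, and $r^2\tr([X,Y]RW_rR)$ alone gives $\tr([X,Y]W)$. The paper sees this by noting that $\sum_{\ell\geq 2}r^{2\ell-2}(1-r^2)(RW_rR)^{(\ell)}$ maps each $e_k$ to a sum of at most $k+1$ terms of norm $O(1-r^2)$, so it tends to $0$ strongly and Lemma \ref{sot approximation} applies.
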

\begin{proof}
Since $\phi_r$ converges to $\phi$ almost everywhere on $\mathbb{T}$ as $r\nearrow 1$ with the $L^\infty$-norm uniformuly bounded (c.f. maximum modulus principle), $T_{\phi_r}$ converges to $T_\phi$ with respect to the strong-$*$ operator topology. Since the unit ball of $\mathcal{J}_1$ is closed with respect to the WOT (Theorem 2.7 (d) of \cite{simon2005trace}), it follows $[T_{\phi}^*, T_{\phi} ]$ is in the trace class and that $||[T_{\phi}^*, T_{\phi} ]||_1\leq M$ under the assumption of the proposition.

To express $dP_T$ as a weak-$*$ limit, we have to show
$$\text{tr}([T_{\Re{\phi}}^n T_{\Im{\phi}}^m, T_{\Im{\phi}}])=\lim_{r\nearrow 1} \text{tr}([T_{\Re{\phi}_r}^n T_{\Im{\phi_r}}^m, T_{\Im{\phi_r}}]).$$
Using the formula $[AB,C]=A[B,C]+[A,C]B$ for any bounded operators $A,B,C$, we can write
$$[T_{\Re{\phi}_r}^n T_{\Im{\phi_r}}^m, T_{\Im{\phi_r}}]=\sum_{j:\mathrm{finite}} A_r^{(j)}[T_{\Re{\phi}_r}, T_{\Im{\phi_r}}]B_r^{(j)},$$
where $A_r^{(j)}$ and $B_r^{(j)}$ are products of $T_{\Re{\phi}_r}$ and $T_{\Im{\phi_r}}$.
Then, 
$$\tr([T_{\Re{\phi}_r}^n T_{\Im{\phi_r}}^m, T_{\Im{\phi_r}}])=\sum_{j:\mathrm{finite}} \tr(A_r^{(j)}[T_{\Re{\phi}_r}, T_{\Im{\phi_r}}]B_r^{(j)})=\sum_{j:\mathrm{finite}} \tr([T_{\Re{\phi}_r}, T_{\Im{\phi_r}}]C_r^{(j)}),$$
where $C_r^{(j)}=B_r^{(j)}A_r^{(j)} $.
Note that each $C_r^{(j)}$ converges in the SOT to an operator obtained by replacing $\phi_r$ appearing in $C_r^{(j)}$ with $\phi$. We denote $C^{(j)}:=s\text{-}\lim_{r\nearrow 1}C_r^{(j)}$.
It follows from the definition of $C^{(j)}$ that
$$\tr([T_{\Re{\phi}}^n T_{\Im{\phi}}^m, T_{\Im{\phi}}])=\sum_{j:\mathrm{finite}} \tr([T_{\Re{\phi}}, T_{\Im{\phi}}]C^{(j)}).$$
(Here, we are not taking limits, but just replacing $\phi_r$ with $\phi$.)
So we have to show 
$$\text{tr}([T_{\Re{\phi}}, T_{\Im{\phi}}]C^{(j)})=\lim_{r\nearrow 1} \text{tr}([T_{\Re{\phi}_r}, T_{\Im{\phi_r}}]C_r^{(j)}).$$
Recall that by Lemma \ref{extracting r}, $\tr([T_{\Re{\phi}_r},T_{\Im{\phi}_r}]C_r^{(j)})$ is equal to $$r^2 \tr([T_{\Re{\phi}},T_{\Im{\phi}}]RC_r^{(j)}R)-\tr\left(\sum_{\ell\geq 2} r^{2\ell-2}(1-r^2)[T_{\Re{\phi}},T_{\Im{\phi}}](RC_r^{(j)}R)^{(\ell)}\right).$$
Now, $\lim_{r\nearrow 1}\tr([T_{\Re{\phi}_r},T_{\Im{\phi}_r}]C_r^{(j)})$ coincides with $$r^2 \lim_{r\nearrow 1}\tr([T_{\Re{\phi}},T_{\Im{\phi}}]RC_r^{(j)}R)-\lim_{r\nearrow 1}\tr\left([T_{\Re{\phi}},T_{\Im{\phi}}]\sum_{\ell\geq 2} r^{2\ell-2}(1-r^2)(RC_r^{(j)}R)^{(\ell)}\right).$$
Since $C^{(j)}=s\text{-}\lim_{r\nearrow 1}C_r^{(j)}$ with the operator norm uniformly bounded, we have $s\text{-}\lim_{r\nearrow 1}RC_r^{(j)}R=C^{(j)}$. Moreover, $Y_r:=\sum_{\ell\geq 2} r^{2\ell-2}(1-r^2)(RC_r^{(j)}R)^{(\ell)}$ converges to $0$ in the SOT as shown below.
Note that the sum $Y_r$ is norm convergent with the norm uniformly bounded. To verify the SOT convergence, take an $x\in \mathrm{span}\{e_0,e_1,e_2,\cdots\}$. Then, $x_r:=Y_r x$ is in fact a finite sum. Thus $\lim_{r \nearrow 1}x_r=0.$
For an arbitrary $y\in H^2(\mathbb{T})$, approximate it by elements in $\mathrm{span}\{e_0,e_1,e_2,\cdots\}$. Hence we obtain $\lim_{r\nearrow 1} Y_r y=0$ as claimed. This combined with Lemma \ref{sot approximation} yields $\lim_{r\nearrow 1}\tr([T_{\Re{\phi}_r},T_{\Im{\phi}_r}]C_r^{(j)})=\tr([T_{\Re{\phi}},T_{\Im{\phi}}]C^{(j)}).$
So far we have shown
\begin{equation}\label{limit formula}
\text{tr}([T_{\Re{\phi}}^n T_{\Im{\phi}}^m, T_{\Im{\phi}}])=\lim_{r\nearrow 1} \text{tr}([T_{\Re{\phi}_r}^n T_{\Im{\phi_r}}^m, T_{\Im{\phi_r}}]).
\end{equation}
We will deduce the trace formula from this equation. Let $K$ be a compact set containing the open ball of radius $||\phi||_{L^\infty}$ centered at $0$.
$K$ contains the spectra of $T_\phi$ and $T_{\phi_r}$.
Since $\phi_r$ is in $C^\infty(\mathbb{T})$, we have the following equations due to Theorem \ref{subtheorem} and Equation (\ref{limit formula}).
\begin{align*}
\int_K nx^{n-1}y^mdP_{T_\phi}&=\text{tr}([T_{\Re{\phi}}^n T_{\Im{\phi}}^m, T_{\Im{\phi}}])\\
&=\lim_{r\nearrow 1} \text{tr}([T_{\Re{\phi}_r}^n T_{\Im{\phi_r}}^m, T_{\Im{\phi_r}}])\\
&=\lim_{r\nearrow 1} \frac{1}{2\pi i}\int_K nx^{n-1}y^m m_{\Phi_r}(x+iy)dxdy.
\end{align*}
That is, for any polynomial $p$, we have
\begin{equation}\label{formula for polynomial}
\int_K p(x,y)dP_{T_\phi}=\lim_{r\nearrow 1} \frac{1}{2\pi i}\int_K p(x,y) m_{\Phi_r}(x+iy)dxdy. 
\end{equation}
Let $f$ be a continuous function on $K$. We are going to validate the next equality
\begin{equation}\label{continuous version}
\int_K f(x,y)dP_{T_\phi}=\lim_{r\nearrow 1} \frac{1}{2\pi i}\int_K f(x,y) m_{\Phi_r}(x+iy)dxdy.\end{equation}
In the following estimates, we will use the fact that the total variation $||P_T||$ is bounded by the half of the trace norm of $[T^*,T]$.
This in turn implies that 
$$||m_{\Phi_r}||_{L^1(K)}=2\pi ||P_{T_{\phi_r}}||\leq \pi||[T_{\phi_r}^*, T_{\phi_r} ]||_1\leq \pi M.$$

Note that since the closed unit ball of $C(K)^*$ is compact with respect to the weak-$*$ topology, $\lim_{r\nearrow 1} \frac{1}{2\pi i}\int_K f(x,y) m_{\Phi_r}(x+iy)dxdy$ exists. Strictly speaking, we have to take a subnet. But we can easily see that the limit doesn't depend on which subnets to take, hence the limit as $r\nearrow 1$ exists. We show that this limit coincides with the integral of $f$ with respect to the Helton--Howe measure. For any $\varepsilon>0$, take a polynomial $p$ such that $||f-p||_{C(K)}<\varepsilon$. One has the following estimates.

\small
\begin{align*}
&~~~~\left|\int_K f~dP_{T_\phi} - \lim_{r\nearrow 1} \frac{1}{2\pi i}\int_K f~ m_{\Phi_r}dxdy\right|\\
&\leq \left|\int_K (f-p)dP_{T_\phi} \right| +\left|\int_K p~dP_{T_\phi}-\lim_{r\nearrow 1} \frac{1}{2\pi i}\int_K p~m_{\Phi_r}dxdy\right| + \left|\lim_{r\nearrow 1} \frac{1}{2\pi i} \int_K (f-p)m_{\Phi_r}dxdy\right|\\
&=\left|\int_K (f-p)dP_{T_\phi} \right| + \left|\lim_{r\nearrow 1} \frac{1}{2\pi i} \int_K (f-p)m_{\Phi_r}dxdy\right|\\
&\leq \varepsilon ||P_{T_\phi}|| + \frac{\varepsilon}{2\pi} \sup_{0<r<1}||m_{\Phi_r}||_{L^1}\\
&\leq \varepsilon ||[T_{\phi}^*, T_{\phi}]||_1 + \varepsilon \sup_{0<r<1}||[T_{\phi_r}^*, T_{\phi_r}]||_1\\
&\leq 2 \varepsilon M.
\end{align*}
\normalsize
Hence we obtain the desired result.
\end{proof}

Imitating the proof of the SOT convergence of $\sum_{\ell\geq 2} r^{2\ell-2}(1-r^2)(RC_r^{(j)}R)^{(\ell)}$ in the above proposition, we obtain the next result.

\begin{corollary}\label{weak convergence}
Let $\phi\in L^\infty(\mathbb{T})$. If $T_\phi$ is almost normal, then $[T_{\phi_r}^*, T_{\phi_r} ]$ converges to $[T_{\phi}^*, T_{\phi} ]$ as $r\nearrow 1$ in the weak topology induced by the duality with $B(\mathcal{H})$.
\end{corollary}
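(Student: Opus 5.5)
The plan is to test against an arbitrary bounded operator $X$ and show $\tr([T_{\phi_r}^*,T_{\phi_r}]X)\to\tr([T_{\phi}^*,T_{\phi}]X)$ as $r\nearrow1$. The weak topology induced by the duality $\mathcal{J}_1$ versus $B(\mathcal{H})$ is exactly this topology.

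Fix $X\in B(\mathcal{H})$. Since $T_\phi$ is almost normal, Lemma \ref{extracting r} applies:
$$\tr([T_{\phi_r}^*,T_{\phi_r}]X)=r^2\tr([T_\phi^*,T_\phi]RXR)-\tr\bigl([T_\phi^*,T_\phi]Y_r\bigr),$$
where
$$Y_r:=\sum_{\ell\geq2}r^{2\ell-2}(1-r^2)(RXR)^{(\ell)}.$$
We need to move the sum inside the trace, i.e.\ write the $\ell$-sum of traces as the trace against $Y_r$. This is justified because the series defining $Y_r$ converges in operator norm: $\|(RXR)^{(\ell)}\|\le\|X\|$ and the scalar coefficients are summable. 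Pairing with the fixed trace-class operator $[T_\phi^*,T_\phi]$ is norm continuous, so the interchange is valid.

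For the first term, $R=R(r)$ is a diagonal contraction with $R\to1$ in the SOT as $r\nearrow1$. Hence $RXR\to X$ in the SOT, with norm bounded by $\|X\|$. Take adjoints: $(RXR)^*=RX^*R\to X^*$ in the SOT as well. Then $\tr(KRXR)=\overline{\tr(RX^*RK^*)}$ holds for trace class $K$. Applying Lemma \ref{sot approximation} with $\mathcal{J}_1^{(0)}=\mathcal{J}_1$ to $K^*$ gives $RX^*RK^*\to X^*K^*$ in $\mathcal{J}_1$. Therefore $r^2\tr([T_\phi^*,T_\phi]RXR)\to\tr([T_\phi^*,T_\phi]X)$.

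The second term is the main point, and it is handled by imitating the argument in Proposition \ref{sufficient condition}: we show $Y_r\to0$ in the SOT with $\|Y_r\|\le\|X\|$. Uniform boundedness is immediate, since
$$\sum_{\ell\ge2}r^{2\ell-2}(1-r^2)=r^2\le1.$$
For SOT convergence, we first reduce to basis vectors using the uniform bound. Fix $e_k$. By definition $(RXR)^{(\ell)}=S^{\ell-2}RXRS^{*(\ell-2)}$, and $S^{*(\ell-2)}e_k=0$ once $\ell-2>k$. So $Y_re_k$ is a finite sum over $\ell\le k+2$. Each summand carries the factor $(1-r^2)$ and has norm at most $\|X\|$, so
$$\|Y_re_k\|\le (k+1)(1-r^2)\|X\|\to0.$$
Density of finite spans then gives $Y_r\to0$ in the SOT. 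Applying the same reasoning to $Y_r^*$, which has the same form with $X^*$ in place of $X$, and using Lemma \ref{sot approximation} together with the adjoint trick as above, we get $\tr([T_\phi^*,T_\phi]Y_r)\to0$.

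Combining the two limits yields the weak convergence of $[T_{\phi_r}^*,T_{\phi_r}]$ to $[T_\phi^*,T_\phi]$. The only delicate point is that Lemma \ref{sot approximation} requires SOT convergence on the correct side of the compact factor. This is why we pass to adjoints and work with $Y_r^*$ and $RX^*R$.
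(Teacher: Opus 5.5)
Your proof is correct and follows essentially the route the paper indicates. You apply Lemma \ref{extracting r} to an arbitrary bounded $X$, show that $RXR\to X$ and $Y_r\to 0$ in the SOT with uniform norm bounds, and conclude with Lemma \ref{sot approximation}; your estimate $\|Y_re_k\|\le (k+1)(1-r^2)\|X\|$ makes the paper's ``imitating'' step explicit. The passage to adjoints is harmless but unnecessary: by Proposition \ref{properties of tr}, $\tr(KA_r)=\tr(A_rK)$ for trace-class $K$, so Lemma \ref{sot approximation} applies directly with the SOT-convergent factor on the left.
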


At last, we can prove the main result. 

\begin{proof}[Proof of Theorem \ref{general formula}]
The first statement is immediate from Proposition \ref{sufficient condition} and Proposition \ref{approximation of almost normal operators}.
All we have to prove is the second assertion.
Let $K$ be a compact set containing the ball of radius $||\phi||_{L^\infty}$ centered at $0$. Then,
\begin{align*}
\int_K nx^{n-1}y^mdP_{T_\phi}&=\lim_{r\nearrow 1} \frac{1}{2\pi i}\int_K nx^{n-1}y^m m_{\Phi_r}(x+iy)dxdy \\
&=\frac{1}{2\pi i}\int_K nx^{n-1}y^m m_{\Phi}(x+iy)dxdy.
\end{align*}

We validate the last equality. 
First, note that for $z\in \mathbb{D}$, $\Phi(z)=w$ if and only if $\Phi_r(z/r)=w$. It follows from the definition of the signed multiplicity function that $m_{\Phi_r}$ converges to $m_{\Phi}$ almost everywhere on $K$ and
$$|m_{\Phi_r}(w)|\leq \#(\Phi_r^{-1}(\{w\})\cap \mathbb{D}) \leq \#(\Phi^{-1}(\{w\})\cap\mathbb{D})$$
for all $w\in\mathbb{C}$.
As in Lemma \ref{existence criterion}, $\#(\Phi^{-1}(\{w\})\cap\mathbb{D})$ is integrable. Thus applying Lebesgue's dominated convergence theorem, the last equality holds.
\end{proof}

Next is an immediate consequence of Theorem \ref{index formula} and Theorem \ref{general formula}.

\begin{corollary}
Let $\phi\in L^\infty(\mathbb{T})$ and assume that $T_\phi$ is almost normal and that $\int _\mathbb{D}|J(\Phi)|dxdy<\infty$. Then for almost all $\lambda$ outside $\sigma_e(T_\phi)$, we have
$$\mathrm{ind}(\lambda-T_\phi)=-m_\Phi(\lambda).$$
\end{corollary}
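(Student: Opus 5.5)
The plan is to combine the second assertion of Theorem \ref{general formula} with the index formula, Theorem \ref{index formula}, and then compare the two resulting descriptions of the density of $P_{T_\phi}$ on the complement of the essential spectrum.

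\textbf{Step 1: density from the main theorem.} Since $T_\phi$ is almost normal and $\int_\mathbb{D}|J(\Phi)|dxdy<\infty$, Theorem \ref{general formula} gives
$$dP_{T_\phi}=\frac{1}{2\pi i}m_\Phi(x+iy)\,dxdy.$$
Here $m_\Phi\in L^1$ by Lemma \ref{existence criterion}. So $\frac{1}{2\pi i}m_\Phi$ is a version of the Radon--Nikodym derivative $dP_{T_\phi}/dxdy$, which exists by Theorem \ref{a.c.}, and it is determined up to a Lebesgue null set.

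\textbf{Step 2: density from the index formula.} Let $U$ be a connected component of $\mathbb{C}\setminus\sigma_e(T_\phi)$. By Theorem \ref{index formula}, on $U$ the density equals the constant $-\frac{1}{2\pi i}\ind(T_\phi-\lambda)$, where $\lambda\in U$ is arbitrary since the index is constant on $U$. Uniqueness of densities then gives, for almost every $\lambda\in U$,
$$m_\Phi(\lambda)=-\ind(T_\phi-\lambda).$$

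\textbf{Step 3: assembling the components.} The set $\mathbb{C}\setminus\sigma_e(T_\phi)$ is open, so it has at most countably many components. A countable union of null sets is null, so the identity holds for almost every $\lambda\notin\sigma_e(T_\phi)$.

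\textbf{Step 4: the sign convention.} The statement is phrased with $\ind(\lambda-T_\phi)$ rather than $\ind(T_\phi-\lambda)$. Since $\lambda-T_\phi=-(T_\phi-\lambda)$ and multiplying by the scalar $-1$ changes neither the kernel nor the cokernel, the two indices coincide. Hence
$$\ind(\lambda-T_\phi)=\ind(T_\phi-\lambda)=-m_\Phi(\lambda)$$
for almost every $\lambda\notin\sigma_e(T_\phi)$.

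The argument is essentially bookkeeping. The only point needing care is the almost-everywhere uniqueness of the density: two $L^1_{\mathrm{loc}}$ functions that define the same measure agree almost everywhere. No step poses a real obstacle.
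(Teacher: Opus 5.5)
Your proof is correct and takes the same route as the paper. The paper states this corollary as an immediate consequence of Theorem \ref{index formula} and the second assertion of Theorem \ref{general formula}, which is exactly your comparison of the two densities; your Steps 3 and 4 (countably many components, and $\ind(\lambda-T_\phi)=\ind(T_\phi-\lambda)$) supply the routine details the paper leaves implicit.
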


Although we have exhibited a formula for the Helton--Howe measure if $T_\phi$ happens to be almost normal, the author knows only a few conditions to ensure almost normality.
Combining the propositions used to prove the main theorem, we obtain an abstract characterization of almost normality.

\begin{corollary}
$T_\phi$ is almost normal if and only if the net $\{||[T_{\phi_r}^*, T_{\phi_r} ]||_1\}_{r<1}$ is uniformly bounded.
\end{corollary}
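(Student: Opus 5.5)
The corollary has two directions, and both follow from results already established; the plan is simply to assemble them.

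For the direction ``$\Leftarrow$'', suppose there is $M>0$ with $\|[T_{\phi_r}^*,T_{\phi_r}]\|_1\le M$ for all $r<1$. Proposition \ref{sufficient condition} applies verbatim and gives that $T_\phi$ is almost normal, together with $\|[T_\phi^*,T_\phi]\|_1\le M$. Its argument rests on two facts. First, $T_{\phi_r}\to T_\phi$ in the strong-$*$ topology, because $\phi_r\to\phi$ almost everywhere with uniformly bounded sup norms. Second, the closed ball of radius $M$ in $\mathcal{J}_1$ is WOT-closed.

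For the direction ``$\Rightarrow$'', suppose $T_\phi$ is almost normal, i.e.\ $[T_\phi^*,T_\phi]\in\mathcal{J}_1$. Take $\Psi$ to be the operator norm, so that $\mathcal{J}_\Psi^{(0)}=\mathcal{K}(\mathcal{H})$ and $\mathcal{J}_{\Psi^*}=\mathcal{J}_1$ with the trace norm. Then Proposition \ref{approximation of almost normal operators} gives, for every $r<1$,
$$\|[T_{\phi_r}^*,T_{\phi_r}]\|_1\le 2\|[T_\phi^*,T_\phi]\|_1 .$$
This is a uniform bound with $M=2\|[T_\phi^*,T_\phi]\|_1$.

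The only point needing care is the identification of $\mathcal{J}_1$ with the dual of the compacts, where $\|T\|_1$ equals the supremum of $|\tr(TX)|$ over finite rank $X$ with $\|X\|\le1$. That identification is exactly what the proof of Proposition \ref{approximation of almost normal operators} uses. It is standard trace duality, so no genuine obstacle remains and the corollary follows by combining the two propositions.
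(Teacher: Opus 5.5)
Your proposal is correct and follows the paper's own route. The paper also obtains this corollary by combining Proposition \ref{sufficient condition} for the ``$\Leftarrow$'' direction with Proposition \ref{approximation of almost normal operators}, specialized to $\mathcal{J}_{\Psi^*}=\mathcal{J}_1$ and $\mathcal{J}_\Psi^{(0)}=\mathcal{K}(\mathcal{H})$, for the ``$\Rightarrow$'' direction with the bound $2\|[T_\phi^*,T_\phi]\|_1$.
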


One can give a more explicit sufficient condition using Besov spaces. Since $[T_\phi^*,T_\phi]=2i(H_{\Im{\phi}}^*H_{\Re{\phi}}-H_{\Re{\phi}}^*H_{\Im{\phi}})$, we focus on Hankel operators. The following definition can be found in the arguments in page 115 of \cite{zhu2007operator}. 

\begin{definition}[Analytic Besov spaces]
Let $\phi \in H^\infty(\mathbb{T})$  For $1\leq p<\infty$, $\phi$ belongs to the analytic Besov space $A_p$ if
\begin{equation}
\begin{cases*}\notag
 \int_\mathbb{D}(1-|z|^2)^{p-2} |\Phi'(z)|^p dxdy <\infty & \text{if $p \neq1$,} \\
 \int_\mathbb{D} |\Phi''(z)|dxdy <\infty &  \text{if $p=1$}.
\end{cases*}
\end{equation}
\end{definition}

\begin{definition}[Besov spaces]
Let $1\leq p<\infty$. An $L^\infty$ function $\phi$ on $\mathbb{T}$ belongs to the Besov space $B_p$ if $ P_+(\phi)\in A_p$ and $ \overline{(1-P_+)(\phi)}\in A_p.$
\end{definition}

\begin{proposition}[Corollary 1.2 and 2.2 in Chapter 6 of \cite{peller2012hankel} )]
Let $\phi\in L^\infty(\mathbb{T}).$ 
For $1\leq p<\infty$, the Hankel operator $H_\phi$ is in $\mathcal{J}
_p$ if and only if $(1-P_+)\phi\in B_p$.
\end{proposition}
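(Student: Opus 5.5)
Since $H_\phi$ depends only on the negative Fourier coefficients of $\phi$, we may replace $\phi$ by $\psi:=(1-P_+)\phi$ and work with $\bar\psi$, which is analytic. In the basis $\{e_n\}$ of $H^2$ and $\{e_{-m-1}\}$ of $(H^2)^\perp$, $H_\phi$ is then the Hankel matrix $\Gamma_\alpha=(\alpha_{n+m})_{n,m\geq0}$ with $\alpha_j=\hat\phi(-j-1)$. So the statement to prove is: $\Gamma_\alpha\in\mathcal{J}_p$ if and only if the analytic function $\sum_j\alpha_j z^j$ lies in $A_p$. The plan is to prove the two endpoint cases $p=1$ and $p=2$ directly, and then obtain $1<p<\infty$ by interpolation.

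\emph{The case $p=2$.} This is elementary. One has
$$\|\Gamma_\alpha\|_2^2=\sum_j (j+1)|\alpha_j|^2 .$$
A direct computation of $\int_{\mathbb{D}}|\Psi'(z)|^2\,dxdy$ in polar coordinates gives $\pi\sum_j j|\alpha_j|^2$, which is comparable modulo the constant term, so $\Gamma_\alpha\in\mathcal{J}_2$ iff the symbol is in $A_2$.

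\emph{The case $p=1$, sufficiency.} Decompose the symbol dyadically, $\psi=\sum_n W_n*\psi$, using de la Vallée Poussin type kernels $W_n$ whose Fourier transforms are supported in $[2^{n-1},2^{n+1}]$. Each block is a polynomial of degree at most $2^{n+1}$, so its Hankel operator has rank at most $2^{n+1}$ and operator norm at most $\|W_n*\psi\|_\infty$. Hence
$$\|\Gamma\|_1\leq \sum_n 2^{n+1}\|W_n*\psi\|_{L^\infty}.$$
The step here is to check, by standard Littlewood--Paley/Bernstein estimates, that the right-hand side is dominated by $\int_{\mathbb{D}}|\Psi''|\,dxdy$, i.e.\ that the definition of $A_1$ given above is equivalent to the $B^1_{1,1}$ dyadic norm. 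I would pass through $\|W_n*\psi\|_{L^1}$ and Bernstein's inequality $\|W_n*\psi\|_\infty\lesssim 2^n\|W_n*\psi\|_1$.

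\emph{The case $p=1$, necessity.} Here I would use trace duality. A trace class $\Gamma_\alpha$ pairs with any bounded operator, and the averaging map sending a matrix $X$ to the sequence of its antidiagonal averages is the adjoint of $\alpha\mapsto\Gamma_\alpha$. It therefore suffices to show that the dual of $A_1$ (the Bloch space, with the appropriate pairing) embeds into the symbols of bounded Hankel operators, or more concretely that for each Bloch function $b$ there is a bounded operator $X_b$ with $\tr(\Gamma_\alpha X_b)=\langle\alpha,b\rangle$ and $\|X_b\|\lesssim\|b\|_{\mathcal B}$. I would build $X_b$ from the same dyadic blocks: on the $n$-th block, a Hankel/Toeplitz-type piece whose norm is controlled by $\|W_n*b\|_\infty\lesssim\|b\|_{\mathcal B}$, arranged block-diagonally so that the norms do not add. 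I expect this step to be the main obstacle, since it is the genuinely hard half of Peller's theorem.

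\emph{The range $1<p<\infty$.} Use complex interpolation between $p=1$ and $p=\infty$. On the operator side the scale is $[\mathcal{J}_1,\mathcal{B}(\mathcal{H})]_\theta=\mathcal{J}_p$. On the symbol side it is $[A_1,\mathrm{BMOA}]_\theta=A_p$, where the $p=\infty$ endpoint is Nehari's theorem combined with Fefferman's duality, $\Gamma_\alpha$ bounded iff $\psi\in\mathrm{BMO}$.
\begin{itemize}
\item Sufficiency follows by interpolating the linear map $\alpha\mapsto\Gamma_\alpha$ between the two endpoints.
\item Necessity is obtained the same way from the averaging projection onto Hankel matrices, which is bounded $\mathcal{J}_p\to\mathcal{J}_p$ for $1<p<\infty$ and maps $\mathcal{J}_1$ into the Hankels with $A_1$ symbol by the $p=1$ step above. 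This yields equivalence of norms, and therefore the stated characterization for $1\leq p<\infty$.
\end{itemize}
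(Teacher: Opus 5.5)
The paper gives no proof of this proposition; it quotes Peller's book. Your outline therefore has to stand on its own. The reduction to the Hankel matrix $\Gamma_\alpha$ and the $p=2$ computation are fine, but two steps fail.

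\emph{First, the $p=1$ sufficiency estimate controls the wrong norm.} Rank times operator norm gives $\|\Gamma\|_1\le\sum_n 2^{n+1}\|W_n*\psi\|_{L^\infty}$, which is the $B^1_{\infty,1}$ norm. By contrast, $\int_{\mathbb{D}}|\Psi''|\,dxdy$ is equivalent to the $B^1_{1,1}$ norm $\sum_n 2^n\|W_n*\psi\|_{L^1}$, and the two are not comparable. A modulated Fej\'er kernel at frequency $\sim 2^n$ has $L^\infty$ norm about $2^n$ times its $L^1$ norm. Summing such blocks with weights $2^{-n}n^{-2}$ therefore gives a $\psi\in A_1$ for which your right-hand side diverges. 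Bernstein's inequality does not rescue this: it only bounds your sum by the $B^2_{1,1}$ norm $\sum_n 2^{2n}\|W_n*\psi\|_{L^1}$, which is a stronger condition. What is missing is an $L^1$ bound in place of rank times norm. For an analytic polynomial $q$ of degree $d$, write $\Gamma_q=\int_{\mathbb{T}}q(\zeta)\,v_\zeta v_\zeta^{T}\,\frac{|d\zeta|}{2\pi}$ with $v_\zeta=(\bar\zeta^{\,j})_{j=0}^{d}$. Each $v_\zeta v_\zeta^{T}$ has trace norm $d+1$, so $\|\Gamma_q\|_1\le(d+1)\|q\|_{L^1}$. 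Summing over the dyadic blocks then gives $\|\Gamma\|_1\lesssim\sum_n 2^n\|W_n*\psi\|_{L^1}$, as required.

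\emph{Second, your necessity argument for $1<p<\infty$ rests on a false claim.} The averaging projection $\mathcal{P}$ does not map $\mathcal{J}_1$ into Hankel matrices with $A_1$ symbol. Take the rank-one $T=x\otimes e_0$ with $x=(\hat h(m))_m$ and $h=\sum_k k^{-1}z^{2^k}\in H^2$. Its antidiagonal sums are the $\hat h(m)$, so $\mathcal{P}T=\Gamma_\alpha$ with $\alpha_m=\hat h(m)/(m+1)$, and the dyadic $B^1_{1,1}$ sum of this $\alpha$ is $\sum_k 1/k=\infty$. So $\mathcal{P}$ is unbounded at the endpoint and provides no retraction to interpolate. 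Moreover, its boundedness on $\mathcal{J}_p$ for $1<p<\infty$ is normally deduced from the very characterization you are proving.

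Use duality instead. With $\langle\alpha,\beta\rangle=\tr(\Gamma_\alpha\Gamma_\beta^*)=\sum_m(m+1)\alpha_m\overline{\beta_m}$ one has $A_p^*\cong A_{p'}$. Sufficiency at $p'$ then yields $|\langle\alpha,\beta\rangle|\le\|\Gamma_\alpha\|_p\|\Gamma_\beta\|_{p'}\lesssim\|\Gamma_\alpha\|_p\|\beta\|_{A_{p'}}$, hence $\alpha\in A_p$. To avoid assuming $\alpha\in A_p$ a priori, run this for the dilates $R\Gamma_\alpha R$, which satisfy $\|R\Gamma_\alpha R\|_p\le\|\Gamma_\alpha\|_p$.

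For sufficiency, the identity $[A_1,\mathrm{BMOA}]_\theta=A_p$ is asserted rather than proved. The real method is easier here:
$(B^1_{1,1},B^0_{\infty,1})_{\theta,p}=B^{1/p}_{p,p}$ because the smoothness indices differ, and $(\mathcal{J}_1,\mathcal{B}(\mathcal{H}))_{\theta,p}=\mathcal{J}_p$. The bound $\|\Gamma_q\|\le\sum_n\|W_n*q\|_{L^\infty}$ needed at the second endpoint requires no Nehari--Fefferman.

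Finally, your $p=1$ necessity is still only a sketch. One concrete way to finish it is to pinch $\Gamma_\alpha$ to disjoint dyadic blocks, which is a contraction on $\mathcal{J}_1$. Then use the fact that antidiagonal sums map $\mathcal{J}_1$ contractively into $H^1$.
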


\begin{corollary}
For $\phi\in L^\infty(\mathbb{T})$, $T_\phi$ is almost normal if 
\begin{equation}\notag\Re{\phi}\in B_p ~~~~\text{and}~~~~ \Im{\phi}\in B_q\end{equation}
where $p$ and $q$ are both finite and H\"{o}lder conjugate to each other,
or
\begin{equation}\notag\Re{\phi}\in B_1~~~~\text{and}~~~~ \Im{\phi}\in L^\infty.\end{equation}
\end{corollary}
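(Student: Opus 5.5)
Write $\phi=u+iv$ with $u=\Re\phi$ and $v=\Im\phi$ real-valued in $L^\infty(\mathbb{T})$. The plan is to show directly that $[T_\phi^*,T_\phi]$ is trace class. The starting point is the identity quoted just before the Besov definitions,
$$[T_\phi^*,T_\phi]=2i[T_u,T_v]=2i\bigl(H_{v}^*H_{u}-H_{u}^*H_{v}\bigr).$$
To verify it, write $T_{uv}-T_uT_v=P_+M_u(1-P_+)M_vP_+$ on $H^2$. Since $u$ is real, $P_+M_u(1-P_+)=((1-P_+)M_{u}P_+)^*$, so $T_{uv}-T_uT_v=H_u^*H_v$. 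Similarly $T_{vu}-T_vT_u=H_v^*H_u$, and subtracting the two identities gives $[T_u,T_v]=H_v^*H_u-H_u^*H_v$. It therefore suffices to show that both $H_v^*H_u$ and $H_u^*H_v$ lie in $\mathcal{J}_1$.

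Next I will place $H_u$ and $H_v$ in Schatten classes using the Peller criterion quoted above: $H_\psi\in\mathcal{J}_p$ if and only if $(1-P_+)\psi\in B_p$. Suppose $u\in B_p$. By definition both $P_+u$ and $\overline{(1-P_+)u}$ lie in $A_p$. Then $(1-P_+)\bigl((1-P_+)u\bigr)=(1-P_+)u$ has analytic part $0\in A_p$ and anti-analytic part conjugate to an element of $A_p$, so $(1-P_+)u\in B_p$. Hence $H_u\in\mathcal{J}_p$, and in the same way $v\in B_q$ gives $H_v\in\mathcal{J}_q$. One point needs a quick check: $(1-P_+)u$ is used as a symbol, so it must be in $L^\infty$, or else the criterion must be applied in the form where only $(1-P_+)\phi$ enters. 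I will read the Proposition as stated, with $\phi=u\in L^\infty$ itself, which avoids the issue. I also note that $H_\psi$ depends only on $(1-P_+)\psi$, so nothing is lost.

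The conclusion in the first case then follows from Hölder's inequality for Schatten classes: if $A\in\mathcal{J}_p$ and $B\in\mathcal{J}_q$ with $1/p+1/q=1$, then $AB\in\mathcal{J}_1$ and $\|AB\|_1\le\|A\|_p\|B\|_q$. Since adjoints preserve $\mathcal{J}_p$, this gives $H_v^*H_u\in\mathcal{J}_1$ and $H_u^*H_v\in\mathcal{J}_1$, so $[T_\phi^*,T_\phi]\in\mathcal{J}_1$.

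In the second case, $u\in B_1$ gives $H_u\in\mathcal{J}_1$. Also $v\in L^\infty$ makes $H_v$ bounded with $\|H_v\|\le\|v\|_\infty$. Because $\mathcal{J}_1$ is a two-sided ideal, both products are again trace class.

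The step most likely to cause friction is the bookkeeping that passes from $u\in B_p$ to $(1-P_+)u\in B_p$ within the paper's definitions. Once the Peller criterion is taken as given, that step is formal, and the rest of the argument is routine.
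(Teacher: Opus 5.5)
Your proof is correct and follows the route the paper intends. The paper gives no separate proof and only points to the identity $[T_\phi^*,T_\phi]=2i(H_{\Im\phi}^*H_{\Re\phi}-H_{\Re\phi}^*H_{\Im\phi})$ (which you verify correctly), Peller's criterion, and then H\"{o}lder's inequality for Schatten classes, or the ideal property of $\mathcal{J}_1$ in the $B_1$ case. The $L^\infty$ worry you flag is harmless under the paper's conventions: $A_p$ is defined as a subset of $H^\infty(\mathbb{T})$, so $\overline{(1-P_+)u}\in A_p$ already forces $(1-P_+)u\in L^\infty$, and hence $(1-P_+)u\in B_p$ in the paper's sense.
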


\begin{lemma}\label{existence of m for B}
Let $\phi\in L^\infty(\mathbb{T})$. Suppose $\Re{\phi}\in B_p$ and $\Im{\phi}\in B_q$ where $p,q>1$ are finite H\"{o}lder conjugate numbers. Then, $T_\phi$ is almost normal and 
\begin{equation}\notag dP_{T_{\phi}}=\frac{1}{2\pi i}m_{\Phi}(x+iy)dxdy.\end{equation}
\end{lemma}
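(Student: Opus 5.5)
Almost normality is the content of the preceding corollary, applied with the Hölder conjugate pair $p,q$. By the second assertion of Theorem \ref{general formula}, it then remains only to show $\int_\mathbb{D}|J(\Phi)|\,dxdy<\infty$. So the whole task reduces to proving integrability of the Jacobian of the harmonic extension from the Besov conditions.

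\textbf{Step 1: expressing the Jacobian.} Write $u=\Re\phi$, $v=\Im\phi$ and let $U,V$ be their harmonic extensions, so $\Phi=U+iV$ and $J(\Phi)=U_xV_y-U_yV_x$. Then
$$|J(\Phi)|\leq |\nabla U|\,|\nabla V|.$$
Decompose $u=u_++\overline{u_-}$ with $u_+=P_+u$ and $u_-=\overline{(1-P_+)u}$ (the constant term is irrelevant). Then $U=U_++\overline{U_-}$ with $U_\pm$ holomorphic, so
$$|\nabla U|\leq C\bigl(|U_+'|+|U_-'|\bigr),$$
and similarly for $V$.

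\textbf{Step 2: the key estimate (main obstacle).} We must bound $\int_\mathbb{D}|F'(z)|\,|G'(z)|\,dxdy$ for $F\in A_p$ and $G\in A_q$. The natural move is to insert the weight
$$1=(1-|z|^2)^{(p-2)/p}\,(1-|z|^2)^{(q-2)/q},$$
which holds because $\frac{p-2}{p}+\frac{q-2}{q}=2-2\bigl(\tfrac1p+\tfrac1q\bigr)=0$. Hölder's inequality with exponents $p,q$ then gives
$$\int_\mathbb{D}|F'||G'|\,dxdy\leq\Bigl(\int_\mathbb{D}(1-|z|^2)^{p-2}|F'|^p\Bigr)^{1/p}\Bigl(\int_\mathbb{D}(1-|z|^2)^{q-2}|G'|^q\Bigr)^{1/q}<\infty.$$
Both factors are finite by the definition of $A_p$ and $A_q$ with $p,q\neq1$, which is exactly why the endpoint case $p=1$ is excluded here. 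This weight-matching is the step where care is needed. One must check that the definition of $B_p$ applies to real-valued $u$, where $u_-=\overline{u_+}$ up to a constant, so that the condition reduces to $u_+\in A_p$.

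\textbf{Step 3: conclusion.} Summing the four cross terms $|U_\pm'|\,|V_\pm'|$ shows $J(\Phi)\in L^1(\mathbb{D})$. By Lemma \ref{existence criterion}, $m_\Phi$ is then an $L^1$ function. The second part of Theorem \ref{general formula} now yields $dP_{T_\phi}=\frac{1}{2\pi i}m_\Phi(x+iy)\,dxdy$.
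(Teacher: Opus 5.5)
Your proposal is correct and follows essentially the same route as the paper: almost normality comes from the preceding corollary, Theorem \ref{general formula} and Lemma \ref{existence criterion} reduce everything to $\int_\mathbb{D}|J(\Phi)|\,dxdy<\infty$, and that integral is bounded by H\"older's inequality with the matched weights $(1-|z|^2)^{(p-2)/p}(1-|z|^2)^{(q-2)/q}=1$. The paper instead writes $|J(\Phi)|=2\bigl|\frac{\partial G}{\partial z}\overline{\frac{\partial F}{\partial z}}-\frac{\partial F}{\partial z}\overline{\frac{\partial G}{\partial z}}\bigr|$, so only $P_+\Re\phi$ and $P_+\Im\phi$ enter, rather than your four cross terms. (Minor slip: for real $u$ one has $u_-=u_+-\hat u(0)$, not $\overline{u_+}$ up to a constant; this is harmless because the definition of $B_p$ already gives $u_\pm\in A_p$ directly.)
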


\begin{proof}
By Lemma \ref{existence criterion}, it suffices to show $\int _\mathbb{D}|J(\Phi)|dxdy<\infty$.
Let $f=\Re{\phi}$ and $g=\Im{\phi}$.  $F,G$ denote their harmonic extensions. 
Then,
\begin{align*}
\int _\mathbb{D}|J(\Phi)|dxdy&= \int _\mathbb{D}\left|\left|\frac{\partial \Phi}{\partial z}\right|^2-\left|\frac{\partial \Phi}{\partial \bar{z}}\right|^2\right|dxdy \\
&=\int _\mathbb{D}2 \left| \frac{\partial G}{\partial z} \overline{\frac{\partial F}{\partial z}}-  \frac{\partial F}{\partial z} \overline{\frac{\partial G}{\partial z}}\right|dxdy \\
&\lesssim \left( \int _\mathbb{D} (1-|z|^2)^{p-2}\left|\frac{\partial F}{\partial z}\right|^p dxdy\right)^{\frac{1}{p}}  \left( \int _\mathbb{D} (1-|z|^2)^{q-2}\left|\frac{\partial G}{\partial z} \right|^q dxdy \right)^{\frac{1}{q}}\\ 
& <\infty.
\end{align*}
Note that $\frac{\partial F}{\partial z}$ coincides with the differential of the harmonic extension of $P_+f$. This completes the proof.
\end{proof}

\section*{Acknowledgements}
I am deeply indebted to my supervisor, Professor Masaki Izumi, for his continuous support and guidance. He read earlier drafts of this thesis with great care, and his numerous suggestions have significantly improved the final version of this manuscript.

\bibliographystyle{amsalpha}
\bibliography{references}

\end{document}